\newtheorem{lemma}{Lemma}
\newtheorem{theorem}{Theorem}
\newtheorem{corollary}{Corollary}
\newtheorem{remark}{Remark}
\def\ps@pprintTitle{%
  \let\@oddhead\@empty
  \let\@evenhead\@empty
  \let\@oddfoot\@empty
  \let\@evenfoot\@oddfoot
}
\begin{document}

\begin{frontmatter}

\title{The Feller diffusion as the limit of a coalescent point process}
\author[label1]{Conrad J.\ Burden}
\ead{conrad.burden@anu.edu.au}
\author[label2]{Robert C.\ Griffiths}
\ead{Bob.Griffiths@Monash.edu}
\address[label1]{Mathematical Sciences Institute, Australian National University, Canberra, Australia}
\address[label2]{School of Mathematics, Monash University, Australia}


\begin{abstract}
The Feller diffusion is studied as the limit of a coalescent point process in which the density of the node height distribution is 
skewed towards zero.  Using a unified approach, a number of recent results pertaining to scaling limits of branching processes are 
reviewed and reinterpreted as properties of the Feller diffusion arising from this limit.  The notion of Bernoulli sampling of a finite population is 
extended to the diffusion limit to cover finite Poisson-distributed samples drawn from infinite continuum populations.  We show that 
the coalescent tree of a Poisson-sampled Feller diffusion corresponds to a coalescent point process with a node height distribution 
taking the same algebraic form as that of a Bernoulli-sampled birth-death process.  By adapting methods for analysing $k$-sampled 
birth-death processes, in which the sample size is pre-specified, we develop methods for studying the coalescent properties of the 
$k$-sampled Feller diffusion.   
\end{abstract}

\begin{keyword}
Coalescent point process \sep Feller diffusion \sep Diffusion process \sep Branching process \sep Sampling distributions 
\end{keyword}
\end{frontmatter}

%

\section{Introduction}
\label{sec:Introduction}

The Feller diffusion~\citep{Feller1939,feller1951diffusion} is defined as the stochastic process $\big(X(t)\big)_{t \in \mathbb{R}_{\ge 0}}$ 
with generator 
\begin{equation}	\label{FellerGenerator}
\mathcal{L} = \tfrac{1}{2} x \frac{\partial^2}{\partial x^2}+ \alpha x \frac{\partial}{\partial x}, \qquad x \in \mathbb{R}_{\ge 0}, \quad \alpha \in \mathbb{R}. 
\end{equation}
It arises as the limit of a continuous-time, finite-population linear birth-death (BD) process in 
which the birth and death rates become infinite, but their difference remains finite~\citep[Section~2.1]{BurdenGriffiths24}.  This limit has implicitly manifested 
several times in the literature in the guise of the large-population limit of a critical BD process \citep[Section~3.2]{Aldous05}, the long-time, near-critical limit 
of BD processes~\citep{o1995genealogy,Harris20}, the large-population limit of sampled BD processes \citep{Crespo21}, and the diffusion limit of a 
Bienaym\'{e}-Galton-Watson process \citetext{\citealp[p235 and Section~3.7]{cox78}; \citealp{burden2016genetic}; \citealp{BurdenSoewongsoso19}}.

The purpose of this paper is to survey and extend recently reported results associated with scaling limits of BD processes, which we reinterpret as 
Feller diffusions conditioned on a single ancestral founder~\citep{BurdenGriffiths25}.  We do so under a unified set of principles based on a coalescent 
point process (CPP) approach~\citep{Popovic04,Aldous05}.  The CPP and its relationship to coalescent trees is reviewed in Section~\ref{sec:CPPs}, with emphasis on general properties 
which are needed later in the paper.  We also observe a previously unrecognised close connection between the distribution of coalescent times for a process 
initiated at a specified time $t$ in the past and for a process with an assumed uniform prior on an initiation time in the interval $[0, t]$ in the past.  

The CPP approach to the linear BD process is summarised in Section~\ref{sec:BDProcess}, and a convenient parameterisation due to \citet{Wiuf18} is introduced which 
makes explicit a symmetry between super-critical and sub-critical process.  This symmetry is shown to emerge naturally from the CPP approach as a consequence 
of requiring that the ancestral tree of a final population converges to a single ancestral founder in the sub-critical case.  In Section~\ref{sec:FellerCPP} the Feller 
diffusion is shown to correspond to a limiting case of a CPP in which the density of the node-height distribution is skewed towards zero.  A concise derivation of the 
known joint density of coalescent times for a Feller diffusion descended from a single founding ancestor and conditioned on an observed current population size 
is given.  

Section~\ref{sec:Sampling} is devoted to sampling distributions.  Both Bernoulli sampling, in which each member of a population is sampled with a specified 
probability, and $k$-sampling, in which a specified number of individuals are sampled, are considered.   We extend to the Feller diffusion a result due to 
\citet{Wiuf18} that a Bernoulli-sampled BD process corresponds to a CPP with a node-height density taking the same functional form as that of an 
unsampled BD process.  We also define an analogue of Bernoulli sampling appropriate for Feller diffusions, which we call Poisson sampling.  By extending a procedure 
due to \citet{Lambert18} we demonstrate how to obtain $k$-sampling distributions for a Feller diffusion from Poisson-sampling distributions.  The procedure 
is then used to reproduce known joint distributions of coalescent times for near-critical BD processes due to \citet{Harris20}.  A method due to \citet{Crespo21} 
for determining $k$-sampled distributions in the large-population limit of a BD process is shown to apply to the Feller diffusion, and it is noted that the method 
may have broader applications.  

In Section~\ref{sec:ExpectedWkWiuf} a formula for calculating expected inter-coalescent waiting times in terms of the hypergeometric function is given for 
CPPs whose node-height distribution takes the form of the \citet{Wiuf18} parameterisation.  Conclusions are drawn in Section~\ref{sec:Conclusions}.  
\ref{sec:notation} contains a table of notation.  

%

\section{Coalescent point processes}
\label{sec:CPPs}

The CPP is neatly summarised in the introduction to \citet{Lambert13b} as follows.  
\begin{quote}
 A CPP with stem age $t$ is a random ultrametric tree with height $t$ whose node depths are characterised by independent draws 
 from the same distribution with range $[0, \infty)$ until a value larger than $t$ is drawn.  Let the first draw bigger than $t$ be the
 $n$-th draw.  The first $n - 1$ draws $H_1, \ldots ,H_{n - 1}$ give rise to a phylogenetic tree on $n$ tips.  (See Fig.~\ref{fig:CPP_diagram}.)
\end{quote}
In this definition, the parameter $t$ can be identified with the root of an associated coalescent tree.  
The $k$-ancestor to $(k - 1)$-ancestor coalescent event times $T_k$ measured back from the from the present are related to the stem height $t$ 
and to order statistics $H_{(1)} < \cdots < H_{(n - 1)}$ of the node heights by 
\begin{equation}	\label{OrderedHkToTk}
T_1 = t, \qquad T_k = H_{(n + 1 - k)}, \quad k = 2, \ldots, n.  
\end{equation}

\begin{figure}
 \centering
 \includegraphics[width=\linewidth]{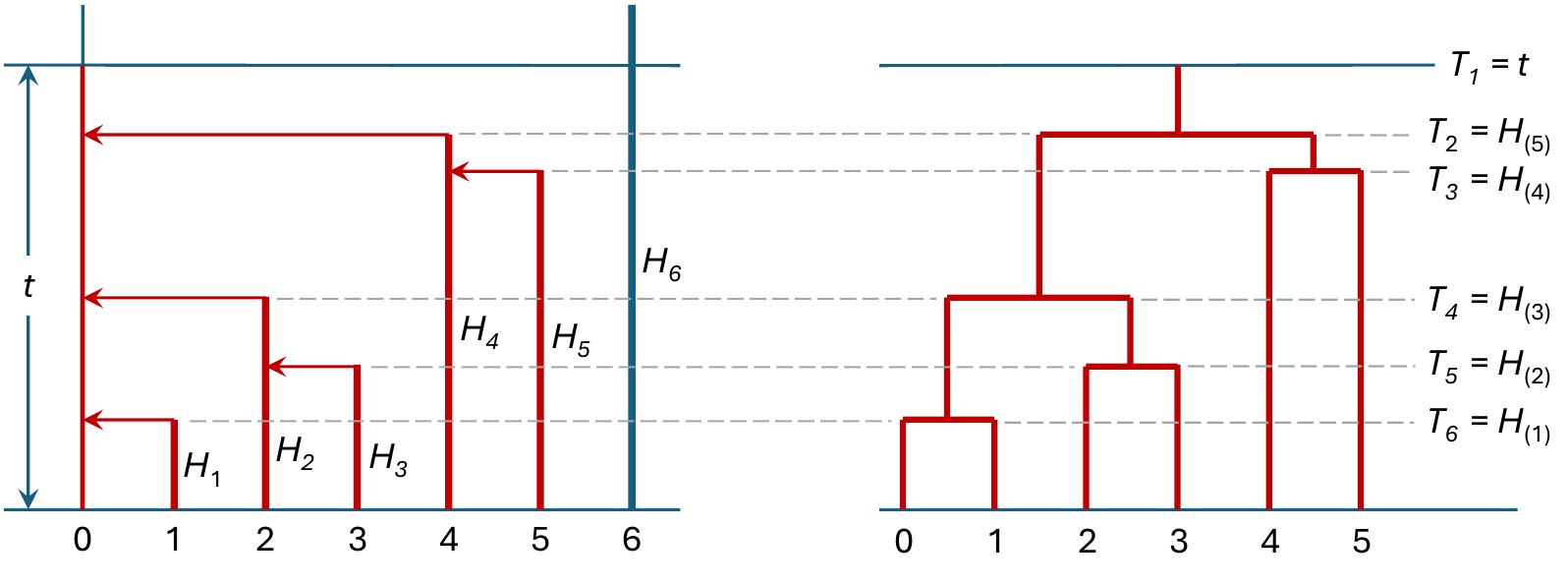}
  \caption{A CPP with $n = 6$ and the associated ultrametric tree, interpreted as a coalescent tree.}
 \label{fig:CPP_diagram}
 \end{figure}

\citet{Lambert13b} show that a CPP can be associated with the coalescent tree of any BD process $\big(N(u)\big)_{u \in [0, t]}$, 
for which the forward-in-time birth and death rates (or speciation and extinction rates in the context of phylogenetics), $\lambda(u)$ 
and $\mu(u)$ respectively, depend on the absolute time $u$ since initiation of the process, are independent of $N(u)$, and the death rate 
$\mu(u)$ possibly further depends on a non-heritable trait\footnote{See \citet[Section~4.4]{Lambert18} for the definition of a non-heritable trait.}.  
From Section~\ref{sec:BDProcess} onwards the current paper is restricted to the linear BD process, that is, the BD process with constant, 
trait-independent rates $\lambda$ and $\mu$, and to its diffusion limit, the Feller diffusion with constant growth rate $\alpha$.  
The remainder of Section~\ref{sec:CPPs} is devoted to properties of coalescent trees associated with CPPs in general. 

\citet[Section~1.2]{Lambert18} introduces the terminology {\em inverse tail distribution} for the function 
\[
F^{\rm IT}_H(\tau) := \frac{1}{1 - F_H(\tau)}, 
\]
where $F_H(\tau) := \mathbb{P}(H \le \tau)$ is the typical node height cumulative distribution function ({\em cdf}).  
This inverts to $F_H(\tau) = 1 - {F^{\rm IT}_H(\tau)}^{-1}$, and hence the {\em coalescent density} or {\em node height density} is 
\[
f_H(\tau) := F_H'(\tau) = \frac{{F^{\rm IT}_H}'(\tau)}{{F^{\rm IT}_H(\tau)}^2},  
\]
where prime indicates the derivative.  

\citet[Section~1.2]{Ignatieva20} introduce the idea of a {\em reversed reconstructed process} (RRP) to construct coalescent trees.  
The terminology {\em reconstructed process} was introduced by \citet{Nee94} to reconstruct the phylogenetic history of a set of observed contemporary species 
when no information about extinct lineages is known.  Given a BD process which is stopped at time $t$ (the ``present''), 
the reconstructed process is a forward-in-time inhomogeneous pure-birth process with rate $\lambda_{\rm eff}(u)$, say, which generates the subtree with 
extinct branches pruned, conditional on survival of at least one lineage to time~$t$.  The RRP is the corresponding inhomogeneous 
pure-death process with rate $\mu_{\rm eff}(\tau) = \lambda_{\rm eff}(t - \tau)$, $\tau > 0$, which runs backwards in time from the present.  In the language 
of population genetics it generates a coalescent tree.  The  {\em cdf} of the typical CPP node height is related to the RRP effective death rate by 
\[
F_H(\tau) := \mathbb{P}(H \le \tau) = 1 - \exp\left\{-\int_0^\tau \mu_{\rm eff}(\xi) d\xi \right\},  
\]
 which inverts to 
 \begin{equation}	\label{FHToMuEff}
 \mu_{\rm eff}(\tau) = \frac{d}{d\tau} \log\left\{(1 - F_H(\tau))^{-1}\right\}.  
 \end{equation}
 
%

\subsection{Coalescent time distribution given $T_1 = t$}
\label{TkGivenT1}

It is useful to condition a CPP on a fixed number of tips, $n$, in which case the process generating the associated coalescent tree is called 
a {\em conditioned reconstructed process}~\citep{Gernhard08}.  For such a process, \citet[Section~2]{Stadler09} distinguishes between the 
topological structure of the tree, which she refers to as the discrete {\em ranked oriented tree}, and the set of coalescent times $T_k$.  From the CPP 
construction, it is evident that the ranked oriented tree and joint coalescent time distributions are independent, and that each ranked oriented tree 
has equal probability~\citep[Theorem~2.3]{Gernhard08}.  Although these properties were first demonstrated for the linear BD process in particular, 
they hold for trees associated with CPPs in general.  

The probability that a CPP with stem age $t$ generates a tree with $n$ leaves is 
$
F_H(t)^{n - 1} (1 - F_H(t)), 
$
and the joint probability/density that there are $n$ leaves and node heights $h_1, \ldots, h_{n - 1}$ is 
$
\left(\prod_{j = 1}^{n - 1} f_H(h_j)\right) (1 - F_H(t)).  
$
Thus the density of node heights given $n$ leaves is 
\[
f_{H_1, \ldots, H_{n - 1}}(h_1, \ldots, h_{n - 1} \mid n) = \frac{\left(\prod_{j = 1}^{n - 1} f_H(h_j)\right) (1 - F_H(t))}{F_H(t)^{n - 1} (1 - F_H(t))}
			= \prod_{j = 1}^{n - 1} \left(\frac{f_H(h_j)}{F_H(t)}\right).  
\]
It follows that, conditional on $n$ leaves, the node heights are identically and independently distributed (i.i.d.) with common density 
$f_H(h)/F_H(t)$, $h \in [0, t]$.  From Eq.~(\ref{OrderedHkToTk}) we then have the distribution of coalescent times of the associated tree 
in terms of the distribution of order statistics of i.i.d.\ node heights.  

Suppressing the dependence on $t$, for convenience set 
\begin{equation}	\label{normalisedFH}
F(\tau) = \frac{F_H(\tau)}{F_H(t)}, \qquad f(\tau) = \frac{f_H(\tau)}{F_H(t)}, \qquad \tau \in [0, t].  
\end{equation}
Then the joint {\em cdf} of coalescent times in a tree generated by a CPP conditioned to have $n$ leaves is
\begin{eqnarray}	\label{T2_to_Tn_cdf}
F_{T_2, \ldots, T_n \mid T_1 = t}(\tau_2, \ldots, \tau_n \mid n) & := & \mathbb{P}(T_2 \le \tau_2, \ldots, T_n \le \tau_n \mid T_1 = t, n \text{ leaves})
														\nonumber \\
	& = & F_{H_{(1)}, \ldots, H_{(n - 1)}}(\tau_k, \ldots, \tau_2 \mid n) \nonumber \\
	& = & (n - 1)! \prod_{j = 2}^n F(\tau_j), \nonumber \\ 
	& & \qquad  n \ge 2, \quad t > \tau_2 > \cdots > \tau_n > 0, 
\end{eqnarray}
and the joint density of the $k - 1$ largest coalescent times is 
\begin{eqnarray}	\label{T2_to_Tk_density}
f_{T_2, \ldots, T_k \mid T_1 = t}(\tau_2, \ldots, \tau_k \mid n) 
	& = & f_{H_{(n - k + 1)}, \ldots, H_{(n - 1)}}(\tau_k, \ldots, \tau_2 \mid n) \nonumber \\
	& = & \frac{(n - 1)!}{(n - k)!} \left(\prod_{j = 2}^k f(\tau_j)\right) F(\tau_k)^{n - k}, \nonumber \\
	& & \qquad  k = 2, \ldots, n, \quad t > \tau_2 > \cdots > \tau_k > 0. 
\end{eqnarray}
The marginal density of the coalescent time $T_k$ is 
\begin{eqnarray}	\label{Tk_density}
f_{T_k \mid T_1 = t}(\tau \mid n) & = & f_{H_{(n - k + 1)}}(\tau \mid n) \nonumber \\ 
	& = & \frac{(n - 1)!}{(n - k)!(k - 2)!} \left(1 - F(\tau)\right)^{k - 2} f(\tau) 
							\left(F(\tau)\right)^{n - k}  \nonumber \\  
	& & \qquad\qquad\qquad\qquad\qquad\qquad  2 \le k \le n, \quad 0 <  \tau < t,  
\end{eqnarray}
the joint marginal density of $T_i$ and $T_j$ is 
\begin{eqnarray*}
f_{T_i, T_j \mid T_1 = t}(\tau, s \mid n) & = & f_{H_{(n - i + 1), (n - j + 1)}}(\tau, s \mid n) \nonumber \\ 
	& = & \frac{(n - 1)!}{(n - j)!(j - i - 1)!(i - 2)!} \times \nonumber \\
	& & \qquad  F(s)^{n - j}(F(\tau) - F(s))^{j - i - 1}(1 - F(\tau))^{i - 2} f(\tau)f(s), \nonumber \\ 
	& & \qquad\qquad\qquad\qquad  2 \le i < j \le n, \quad t > \tau > s > 0,  
\end{eqnarray*}
and hence the density of $T_i$ conditional on $T_j$ for $i < j$ is 
\begin{eqnarray}	\label{Ti_given_Tj}
f_{T_i \mid T_j = s, T_1 = t}(\tau) & = & \frac{f_{T_i, T_j \mid T_1 = t}(\tau, s \mid n)}{f_{T_j \mid T_1 = t}(\tau \mid n)} \nonumber \\ 
	& = & \frac{(j - 2)!}{(i - 2)!(j - i - 1)!} \frac{(F(\tau) - F(s))^{j - i - 1}(1 - F(\tau))^{i - 2}}{(1 - F(s))^{j - 2}} f(\tau)  \nonumber \\
	& & \qquad\qquad\qquad\qquad  2 \le i < j \le n, \quad t > \tau > s > 0,  
\end{eqnarray}
which is independent of $n$.  

%

\subsection{Coalescent time distribution with uniform prior on $T_1$} 
\label{sec:T1unifPrior}

As an alternative to fixing the stem height $T_1$ of a branching process {\em a priori}, 
various authors \citep{Aldous05,Gernhard08,Gernhard08a,Stadler09,Wiuf18,Ignatieva20} have considered
setting an improper uniform prior on $[0, \infty)$ for the time since initiation of the process from a single founder.
After setting a uniform prior on $[0, t]$ with density 
\[
f_{T_1}^{{\rm unif}\, [0, t]} (\tau) = \frac{1}{t}, \qquad 0 \le \tau \le t,  
\]
and calculating posterior probabilities, the limit $t \to \infty$ then enables implementation of an improper uniform prior 
as defined by \citet[Section~4.2]{Wiuf18}.  Here we describe a connection between 
between the posterior coalescent properties for a conditioned reconstructed process under the uniform prior on $[0, t]$ and those 
for a conditioned reconstructed process with fixed stem height $t$.  In general, the limit $t \to \infty$ can be taken at the end 
of the day to implement an improper uniform prior.  

The joint posterior probability that $T_1 \in (\tau, \tau + d\tau)$ and that the tree associated with the CPP has $n$ leaves is, 
in terms of the node heights $H_1, H_2, \ldots$, 
\begin{eqnarray*}
\lefteqn{\mathbb{P}^{{\rm unif}\, [0, t]}(T_1 \in (\tau, \tau + d\tau), n \text{ leaves})} \nonumber \\ 
	& = & \mathbb{P}(H_1, \ldots, H_{n - 1} < \tau, H_n \in (\tau, \tau + d\tau)) 
					\times \mathbb{P}^{{\rm unif}\, [0, t]}(T_1 \in (\tau, \tau + d\tau)) \nonumber \\ 
	& = & \frac{1}{t} F_H(\tau)^{n - 1} f_h(\tau) d\tau.  
\end{eqnarray*}
Condition on $n$ by dividing out the marginal probability that there  are $n$ leaves, 
\begin{eqnarray*}
\lefteqn{\mathbb{P}^{{\rm unif}\, [0, t]}(T_1 \in (\tau, \tau + d\tau) \mid n \text{ leaves})} \nonumber \\ 
	& = & \frac{\frac{1}{t} F_H(\tau)^{n - 1} f_h(\tau) d\tau}{\frac{1}{t} \int_0^tF_H(\xi)^{n - 1} f_h(\xi) d\xi} \nonumber \\
	& = & \frac{F_H(\tau)^{n - 1} f_h(\tau) d\tau}{F_H(t)^n \int_0^1u^{n - 1}du} \nonumber \\
	& = & n \left( \frac{F_H(\tau)}{F_H(t)} \right)^{n - 1} \frac{f_H(\tau)}{F_H(t)} d\tau,   
\end{eqnarray*}
giving the density   
\[
f_{T_1}^{{\rm unif}\, [0, t]} (\tau \mid n) = n F(\tau)^{n - 1} f(\tau), \quad 0 < \tau < t,  
\]
with $F(\tau)$ and $f(\tau)$ as in Eq.~(\ref{normalisedFH}).  
Since, by Eq.~(\ref{T2_to_Tn_cdf}), $T_1$ determines the density of $T_2, \ldots, T_n$, we have the joint density 
\begin{eqnarray*}
\lefteqn{ f_{T_1, \ldots, T_n}^{{\rm unif}\, [0, t]} (\tau_1, \ldots, \tau_n \mid n)} \nonumber \\
	& = & f_{T_2, \ldots, T_n \mid T_1 = \tau_1} (\tau_2, \ldots, \tau_n \mid n) f_{T_1}^{{\rm unif}\, [0, t]} (\tau_1 \mid n)
															\nonumber \\ 
	& = & (n - 1)! \left(\prod_{j = 2}^n \frac{f_H(\tau_j)}{F_H(\tau_1)}\right)  
					\times n \left( \frac{F_H(\tau_1)}{F_H(t)} \right)^{n - 1} \frac{f_H(\tau_1)}{F_H(t)}  \nonumber \\ 
	& = & n! \left(\prod_{j = 1}^n \frac{f_H(\tau_j)}{F_H(t)}\right),    \nonumber \\
	& = & n! \left(\prod_{j = 1}^n f(\tau_j)\right),  \quad t > \tau_1 > \cdots > \tau_n > 0,  
\end{eqnarray*}
using Eq.~(\ref{T2_to_Tk_density}) for the first factor.  Moreover, comparing the last result with Eq.~(\ref{T2_to_Tk_density}), 
\begin{equation}	\label{fgivenT1tofunif}
f_{T_1, \ldots, T_n}^{{\rm unif}\, [0, t]} (\tau_1, \ldots, \tau_n \mid n) 
			= f_{T_2, \ldots, T_{n + 1} \mid T_1 = t} (\tau_1, \ldots, \tau_n \mid n + 1), 
\end{equation}
that is, the distribution of $T_1, \ldots, T_n$ subject to a uniform$[0, t]$ prior on $T_1$ and conditional on $n$ leaves is 
identical to the distribution of $T_2, \ldots, T_{n + 1}$ conditional on $T_1 = t$ and $n + 1$ leaves. Analogous results therefore 
hold for any marginal distribution or expectation value.  For instance, writing $\mathbb{E}_n$ for expectation conditional on $n$ leaves, 
\begin{equation}	\label{EgivenT1toEunif}
\mathbb{E}_n^{{\rm unif}\, [0, t]} [T_k] = \mathbb{E}_{n + 1}[T_{k + 1} \mid T_1 = t].  
\end{equation}

%

\subsection{Iterative formulae for expected coalescent times and waiting times for a general CPP}
\label{sec:iterativeTkWk}

Conditioning on $T_1 = t$ and $n$ leaves, the expected coalescent times satisfy the iterative formula (see \ref{sec:iterativeTandW}) 
\begin{eqnarray}	\label{iterativeTkgivenT1}
\lefteqn{\mathbb{E}_n[T_{k} \mid T_1 = t] = \int_0^t \tau f_{T_k \mid T_1 = t}(\tau \mid n) d\tau} \nonumber \\ 
	& = & \frac{(n - 1)!}{(n - k)!(k - 2)!} \int_0^1 u^{k - 2} (1 - u)^{n - k}G(u) du \nonumber \\
	& = & \frac{n - 1}{k - 2} \mathbb{E}_{n - 1}[T_{k - 1} \mid T_1 = t] - \frac{n - k + 1}{k - 2} \mathbb{E}_n[T_{k - 1} \mid T_1 = t], 
																		\quad 3 \le k \le n, 
\end{eqnarray}
where $G^{-1}(\tau) = 1 - F(\tau)$.  
This enables the conditional expectation of all $T_{k} \mid T_1$ to be computed if the conditional expectation of $T_{2} \mid T_1$
for all $n$ can be calculated.  Note that the conditional expectation of $T_{2} \mid T_1$ depends on the specific form of the 
node height {\em cdf} $F_H(\tau)$.  \citet[Section~8.4]{Wiuf18} gives a similar iterative formula, but with $n$ in place of $n - 1$ in the numerator 
in the first term, which we believe is a mistake.  

Define waiting times between coalescent points for a tree with $n$ leaves by 
\[
W_k = T_ k - T_{k + 1}, \qquad 1 \le k \le n.  
\]
The iterative formula 
\begin{eqnarray}	\label{iterativeWkgivenT1}	
\lefteqn{\mathbb{E}_n[W_{k} \mid T_1 = t]} \nonumber \\ 
	& = & \frac{n - 1}{k - 1} \mathbb{E}_{n - 1}[W_{k - 1} \mid T_1 = t] - \frac{n - k + 1}{k - 1} \mathbb{E}_n[W_{k - 1} \mid T_1 = t], 
																		\quad 2 \le k \le n. 
\end{eqnarray}
then follows from Eq.~(\ref{iterativeTkgivenT1}) (see \ref{sec:iterativeTandW}).  

The expected coalescent times and expected waiting times subject to a uniform$[0, t]$ prior on $T_1$ follow immediately from 
Eq.~(\ref{EgivenT1toEunif}) (see also \citet[Eq.~(18) and Section~8.2 respectively]{Wiuf18} for a direct proof): 
\begin{equation*}	
\mathbb{E}_n^{{\rm unif}\, [0, t]} [T_k]  = 
	\frac{n}{k - 1} \mathbb{E}_{n - 1}^{{\rm unif}\, [0, t]} [T_{k - 1}] - \frac{n - k + 1}{k - 1} \mathbb{E}_n^{{\rm unif}\, [0, t]} [T_{k - 1}], 
																		\quad 2 \le k \le n,  
\end{equation*}
\begin{equation}	\label{iterativeWkunifT1}
\mathbb{E}_n^{{\rm unif}\, [0, t]} [W_k]  = 
	\frac{n}{k} \mathbb{E}_{n - 1}^{{\rm unif}\, [0, t]} [W_{k - 1}] - \frac{n - k + 1}{k} \mathbb{E}_n^{{\rm unif}\, [0, t]} [W_{k - 1}], 
																		\quad 2 \le k \le n.  
\end{equation}

%

\section{Birth-death process as a CPP}
\label{sec:BDProcess}

The reconstructed tree of a linear BD process with constant birth rate $\lambda$ and death rate $\mu$ was recognised as a CPP by \citet{Aldous05} for 
the critical case $\lambda = \mu$ and by \citet{Gernhard08} for general $\lambda$ and $\mu$.  We begin with the following Lemma which rederives the node height 
distribution quoted in \citet[Eq.~(4)]{Lambert18}.  

\begin{lemma}
The reconstructed tree of a BD process $\big(N(u)\big)_{u \in [0, t]}$ with constant birth rate $\lambda$ and death rate $\mu$ 
stopped at time $t$ corresponds to a CPP with inverse tail distribution 
\[
F^{\rm IT}_H(\tau) = \begin{cases}
	1 + \frac{\lambda}{\lambda - \mu}\left(e^{(\lambda - \mu) \tau} - 1\right)	& \lambda \ne \mu; \\
	1 + \lambda \tau								& \lambda = \mu. 
	\end{cases}
\]
The corresponding node height {\em cdf} is 
\begin{equation}	\label{birthDeathFH}
F_H(\tau) = \begin{cases}
	\frac{\lambda(e^{(\lambda - \mu) \tau} - 1)}{\lambda e^{(\lambda - \mu) \tau} - \mu} 	& \lambda \ne \mu; \\
	\frac{\lambda \tau}{1 + \lambda \tau}.						& \lambda = \mu. 
	\end{cases}
\end{equation}
\end{lemma}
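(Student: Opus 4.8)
The plan is to exploit the reversed reconstructed process (RRP) description introduced above, so that the only BD-specific input required is the single-lineage survival probability. First I would record the standard fact that a linear BD process started from one individual survives a duration $\tau$---i.e.\ has at least one descendant after elapsed time $\tau$---with probability
\[
S(\tau) = 1 - P_0(\tau) = \frac{\alpha e^{\alpha\tau}}{\lambda e^{\alpha\tau} - \mu}, \qquad \alpha = \lambda - \mu \ne 0,
\]
obtained in the usual way from the master equations (or probability generating function) of the process; the critical case $\alpha = 0$ gives $S(\tau) = (1 + \lambda\tau)^{-1}$ either by the same computation with $\lambda = \mu$ or by letting $\alpha \to 0$.

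Next I would identify the effective death rate $\mu_{\rm eff}(\tau)$ of the RRP. Tracing a single ancestral lineage of a present-day tip backwards from time $t$, a node of the reconstructed tree at depth $\tau$---a forward branching at time $t - \tau$---is \emph{visible} precisely when the sister lineage created there also has descendants surviving to the present, which requires it to persist for the remaining duration $\tau$. Since branchings occur along the focal lineage at rate $\lambda$ and the sister survives with probability $S(\tau)$ independently of the focal lineage, the hazard for the first such visible coalescence is
\[
\mu_{\rm eff}(\tau) = \lambda S(\tau) = \frac{\alpha \lambda e^{\alpha\tau}}{\lambda e^{\alpha\tau} - \mu}.
\]
This simultaneously confirms that distinct lineages coalesce independently with a common depth law---recovering the i.i.d.\ node-height (CPP) structure established by \citet{Aldous05} and \citet{Gernhard08}---and supplies the rate needed for the final step.

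I would then integrate. By Eq.~(\ref{FHToMuEff}), $\log F^{\rm IT}_H(\tau) = \int_0^\tau \mu_{\rm eff}(\xi)\,d\xi$ since $F^{\rm IT}_H(0) = 1$, and the substitution $u = \lambda e^{\alpha\xi} - \mu$ evaluates the integral to $\log\{(\lambda e^{\alpha\tau} - \mu)/\alpha\}$, whence
\[
F^{\rm IT}_H(\tau) = \frac{\lambda e^{\alpha\tau} - \mu}{\alpha} = 1 + \frac{\lambda}{\alpha}\left(e^{\alpha\tau} - 1\right).
\]
Inverting through $F_H = 1 - (F^{\rm IT}_H)^{-1}$ returns the claimed node height cdf, and the $\alpha \to 0$ limit (or the direct critical computation) gives $F^{\rm IT}_H(\tau) = 1 + \lambda\tau$ and $F_H(\tau) = \lambda\tau/(1 + \lambda\tau)$.

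The main obstacle is the middle step: rigorously justifying $\mu_{\rm eff}(\tau) = \lambda S(\tau)$ together with the i.i.d.\ structure of the node depths. This rests on the Markov branching property---memorylessness of the branchings along the focal lineage and independence of the sister subtrees---and on the exchangeability of the consecutively read coalescence depths. One may either invoke the already-established CPP representation of the BD reconstructed tree \citep{Aldous05, Gernhard08} to grant the i.i.d.\ structure outright and use the argument above only to pin down the common law, or derive both at once from the contour/ladder decomposition of the reconstructed tree. The remaining integration and inversion are entirely routine.
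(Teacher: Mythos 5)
Your proposal is correct, and your computations check out: $S(\tau)=1-\mu B(\tau)=\alpha e^{\alpha\tau}/(\lambda e^{\alpha\tau}-\mu)$, the hazard $\lambda S(\tau)$ integrates to $\log\{(\lambda e^{\alpha\tau}-\mu)/\alpha\}$, and inversion recovers the stated $F_H$ (the $\lambda t$ in the paper's critical-case numerator is a typo for $\lambda\tau$, which you correctly obtain). However, your route is genuinely different from the paper's. The paper never touches the RRP or any lineage-tracing argument: it quotes Feller's classical formula for $\mathbb{P}(N(t)=n\mid N(0)=1)$, conditions on non-extinction to see that $N(t)$ is shifted geometric with parameter $\lambda B(t)$, and then matches this against the geometric leaf-count law $F_H(t)^{n-1}(1-F_H(t))$ of a CPP, which forces $F_H(\tau)=\lambda B(\tau)$ for all $\tau$; the inverse tail distribution is then read off rather than derived first. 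Your argument instead identifies the effective death rate $\mu_{\rm eff}(\tau)=\lambda S(\tau)$ (branchings at rate $\lambda$ along the focal lineage, each visible with the sister's survival probability) and integrates via Eq.~(\ref{FHToMuEff}). What each buys: the paper's matching argument is shorter and needs only the transition function of the BD process, with no need to justify the hazard decomposition; yours is more mechanistic, produces the inverse tail distribution directly (the quantity the lemma leads with), uses the RRP machinery of \citet{Ignatieva20} already set up in Section~2, and would extend with little change to time-inhomogeneous rates. Note that the weakness you flag in your own argument---that the i.i.d.\ node-depth (CPP) structure must be imported from \citet{Aldous05} and \citet{Gernhard08}---applies equally to the paper's proof, since matching the one-dimensional leaf-count marginal at each $t$ pins down $F_H$ only once the CPP property is granted; so you are not at a disadvantage on rigor, and your concluding remark that this is the "main obstacle" is an accurate assessment of both proofs.
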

\begin{proof}
Start with the distribution of $N(t)$ conditional on a single founder at time 0 \citep[p480]{Feller68}, 
\[
\mathbb{P}(N(t) = n \mid N(0) = 1) = \begin{cases}
	\mu B(t) 									& n= 0 \\
	(1 - \lambda B(t))(1 - \mu B(t))(\lambda B(t))^{n - 1}	& n = 1, 2, \ldots, 
	\end{cases}
\]
where 
\[
B(t) = \frac{e^{(\lambda - \mu) t} - 1}{\lambda e^{(\lambda - \mu) t} - \mu}.  
\]
Conditional on non-extinction of the process up to time $t$, 
\[
\mathbb{P}(N(t) = n \mid N(0) = 1, N(t) > 0) = (1 - \lambda B(t))(\lambda B(t))^{n - 1}, \qquad n = 1, 2, \ldots, 
\]
and so $(N(t) \mid N(0) = 1, N(t) > 0)$, and hence the number of coalescent events in the interval $[0, t]$, is distributed as 
shifted geometric with ``success'' probability $\lambda B(t)$ for any $t \ge 0$.  This can be achieved with a node height distribution 
\[
\mathbb{P}(H \le \tau) = \lambda B(\tau) = \lambda \frac{e^{(\lambda - \mu) \tau} - 1}{\lambda e^{(\lambda - \mu) \tau} - \mu}, \qquad \tau > 0, 
\]
agreeing with Eq.~(\ref{birthDeathFH}).  
The $\lambda = \mu$ case follows by taking the limit $\mu \to \lambda$.  
\end{proof}
 From Eqs.~(\ref{normalisedFH}), (\ref{T2_to_Tn_cdf}), (\ref{T2_to_Tk_density}) and (\ref{birthDeathFH}) one obtains the joint {\em cdf} and density 
 of coalescent times for a BD process conditioned on $n$ leaves and a time of origin $t$, as in \citet[Section~6.1]{Gernhard08}.  

Note that if $\lambda < \mu$, then $\lim_{\tau \to \infty} F_H(\tau) = \lambda/\mu < 1$, indicating that $\mathbb{P}(H = \infty) > 0$, 
or in other words, that the RRP may not converge to a single ancestral founder as $t \to \infty$.  If one is specifically interested in processes 
initiated from a single ancestral founder 
it is convenient to define an alternate node height conditioned on convergence to  single ancestor with rescaled {\em cdf} 
\begin{equation}	\label{FHaltBD}
F^{\rm BD}_H(\tau)  = \frac{F_H(\tau)}{F_H(\infty)} = \max(\lambda, \mu) \frac{e^{(\lambda - \mu) \tau} - 1}{\lambda e^{(\lambda - \mu) \tau} - \mu}, \qquad \tau, \mu, \lambda \ge 0.  
\end{equation} 
Also define the reparametrisation introduced by \citet[Section~3.2]{Wiuf18} and \citet[Section~2]{Crespo21}
\begin{equation}	\label{WiufParams}
\delta = |\lambda - \mu|, \qquad \gamma = \frac{|\lambda - \mu|}{\max(\lambda, \mu)}.  
\end{equation}
The rescaled node height {\em cdf} is then 
\begin{equation}	\label{FHdeltagamma}
F^{\rm BD}_H(\tau)  = \frac{e^{\delta\tau} - 1}{e^{\delta\tau} - 1 + \gamma}, \qquad \tau, \gamma, \delta > 0,   
\end{equation}
and node height density is 
\[
f^{\rm BD}_H(\tau)  = \frac{\gamma\delta e^{\delta t}}{(e^{\delta t} - 1 + \gamma)^2}, \qquad \tau, \gamma, \delta > 0.  
\]
An important observation is that the node height {\em cdf}, and hence the coalescent tree of a BD process conditioned on convergence to a 
single ancestral founder, is invariant under interchange of $\lambda$ and $\mu$, as observed previously by \citet{Stadler19}.  

%

\section{Feller diffusion as the limit of a CPP}
\label{sec:FellerCPP}

The Feller diffusion arises as the limit of a BD process in which the birth and death rates become infinite, but their difference remains finite.  
In this section, the coalescent tree of a Feller diffusion conditioned on a single ancestral founder is constructed from the limit of a CPP corresponding to 
a BD process with large birth and death rates.  

Consider a linear BD process $\big(N_\epsilon(t)\big)_{t \in \mathbb{R}_{\ge 0}}$ with birth rate $\lambda_\epsilon$ and death rate $\mu_\epsilon$ 
specified as functions of a parameter 
$\epsilon \in \mathbb{R}_{> 0}$ with the property 
\[
\begin{split}
\lambda_\epsilon &= \tfrac{1}{2} \epsilon^{-1} + \tfrac{1}{2} \alpha + \mathcal{O}(\epsilon), \\
 \mu_\epsilon        &= \tfrac{1}{2}\epsilon^{-1} - \tfrac{1}{2} \alpha + \mathcal{O}(\epsilon), 
\end{split}
\]
as $\epsilon \to 0$ for fixed $\alpha \in \mathbb{R}$.  
If $N_\epsilon(t)$ is the number of particles alive at time $t$, set $X(t) = \epsilon N_\epsilon(t)$.  
Then the limiting generator of the process $\big(X(t)\big)_{t \in \mathbb{R}_{\ge 0}}$ as $\epsilon \to 0$ is the Feller diffusion generator 
Eq.~(\ref{FellerGenerator}).  For a proof of this, see \citet[Section~2.1]{BurdenGriffiths24}.  

Specifically, for a given $\alpha \in \mathbb{R}\backslash\{0\}$ choose the 1-parameter family of BD processes with birth and death rates 
\begin{equation}	\label{BDlimit}
\lambda_\epsilon = \tfrac{1}{2} \epsilon^{-1} + \tfrac{1}{2} \alpha, \quad \mu_\epsilon = \tfrac{1}{2}\epsilon^{-1} - \tfrac{1}{2} \alpha, \qquad
		\epsilon > 0. 
\end{equation}
In terms of the parametrisation of a BD process defined by Eq.~(\ref{WiufParams}) this set is equivalently parameterised by   
\begin{equation}	\label{deltaGamma_epsilon}
\delta = |\alpha|, \quad \gamma_\epsilon = \frac{2|\alpha|}{(\epsilon^{-1} + |\alpha|)} = 2|\alpha|\epsilon + \mathcal{O}(\epsilon^2) \quad \text{as }\epsilon \to 0, 
\end{equation}
which inverts to 
\[
\alpha = \pm\delta, \quad \epsilon = \frac{\gamma_\epsilon}{2\delta} + \mathcal{O}(\gamma_\epsilon^2) \quad \text{as }\gamma_\epsilon \to 0. 
\]
Clearly $\delta$ is independent of $\epsilon$ and the limit $\gamma_\epsilon \to 0$ implies $\epsilon \to 0$.  
Suppressing the explicit $\epsilon$ dependence, it follows that the limit $\gamma \to 0$ at fixed $\delta$ of any property of a coalescent tree 
calculated using the node height density Eq.~(\ref{FHdeltagamma}) is that of a Feller diffusion with parameter $\alpha = \pm \delta$,  
where the upper sign corresponds to a super-critical Feller diffusion and the lower sign corresponds to a sub-critical Feller diffusion.  
An important consequence is that coalescent tree of a Feller diffusion conditioned on a given observed population converging to a single 
ancestral founder is invariant under a change in sign of $\alpha$ (see Theorem~\ref{thm:JointTkGivenX} below). 
In general, one finds that properties for a critical Feller diffusion can be obtained by taking the limit $\alpha \to 0$ at the end of the day.  

Heuristically, the $\gamma \to 0$ limit corresponds to a node height density $f_H^{\rm BD}(\tau)$ which becomes heavily 
skewed towards zero, leading to an infinite number of leaves and a high density of nodes close to the base in the coalescent tree in 
Fig.~\ref{fig:CPP_diagram}.  In other words, the coalescent tree `comes down from infinity'.  This behaviour has been noted and recognised 
as a Feller diffusion limit by \citet[Section~3.2, Fig.~2 and Section~5]{Aldous05} for the critical BD process.  

With this machinery set up we have an alternative proof of the following theorem, which is seminal to a number of results in 
\citet{BurdenGriffiths25}\footnote{See also \citet[Thm.~4]{BurdenGriffiths24} for a proof similar to that given in \ref{sec:JointTkProof}, but specifically 
for the case of a super-critical diffusion with an improper uniform prior on $T_1$.}.  
\begin{theorem} \citep[Thm.~1]{BurdenGriffiths25}	\label{thm:JointTkGivenX}
Consider a Feller diffusion $\big(X(u)\big)_{u \in [0, t]}$ with parameter $\alpha \in \mathbb{R}$, 
descended from a single founder at time zero and conditioned on a current population $X(t) = x$. 
The joint density of the $k - 1$ population coalescent times $T_2, \ldots, T_k$ is 
\begin{eqnarray}	\label{jointTkGivenx}
f_{T_2, \ldots, T_k \mid T_1 = t, X(t) = x}(\tau_2, \ldots, \tau_k) & = & \left(\prod_{j = 2}^k \frac{x \mu(\tau_j; |\alpha|)}{2\beta(\tau_j; |\alpha|)}\right) 
									e^{-(x/\beta(\tau_k; |\alpha|) - x/\beta(t; |\alpha|))}, \nonumber \\
	&  & \qquad\qquad\qquad t > \tau_2 > \tau_3 > \ldots > \tau_k > 0, 
\end{eqnarray}
and the marginal density of of the population coalescent time $T_k$ is 
\begin{eqnarray}	\label{TkGivenx}
f_{T_k \mid T_1 = t, X(t) = x}(\tau) & = & \frac{(x/\beta(\tau; |\alpha|) - x/\beta(t; |\alpha|))^{k - 2}}{(k - 2)!} 
							\frac{x \mu(\tau; |\alpha|)}{2\beta(\tau; |\alpha|)} 
									e^{-(x/\beta(\tau_k; |\alpha|) - x/\beta(t; |\alpha|))}, \nonumber \\
	& & \qquad\qquad\qquad\qquad\qquad  k = 2, 3, \ldots, \quad 0 <  \tau < t,  
\end{eqnarray}
where 
\begin{equation}	\label{betaDef}
\beta(\tau; \alpha) := \frac{e^{\alpha\tau} - 1}{2\alpha},  \quad \mu(\tau; \alpha) := \frac{2\alpha e^{\alpha \tau}}{e^{\alpha\tau} - 1} \qquad \alpha \ne 0, 
\end{equation}
and $\beta(\tau; 0) := \tau/2$, $\mu(\tau; 0) := 2/\tau$.  
\end{theorem}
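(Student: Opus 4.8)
The plan is to obtain Theorem~\ref{thm:JointTkGivenX} as the $\gamma \to 0$ diffusion limit of the general CPP coalescent-time densities for a birth-death process, conditioned on a fixed number of leaves, after identifying the leaf count $n$ with the scaled population size $x = \epsilon n$. No new machinery is needed: the limiting correspondence between the BD-process CPP and the Feller diffusion established around Eq.~(\ref{gammaDeltaToAlphaEpsilon}) does all the work, and the combinatorial densities of Section~\ref{TkGivenT1} supply the starting point.

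First I would take the general CPP joint density Eq.~(\ref{T2_to_Tk_density}) for $f_{T_2,\ldots,T_k \mid T_1 = t}(\tau_2,\ldots,\tau_k \mid n)$ and insert the rescaled BD node-height {\em cdf} Eq.~(\ref{FHdeltagamma}) through the normalised quantities $F$ and $f$ of Eq.~(\ref{normalisedFH}). Since the number of CPP leaves coincides with the population size $N_\epsilon(t)$, conditioning on $X(t) = x$ becomes, in the limit, conditioning on $n$ leaves with $\epsilon n \to x$; using Eq.~(\ref{gammaDeltaToAlphaEpsilon}) to write $\epsilon \sim \gamma/(2\delta)$, this fixes the coupling $n\gamma \to 2\delta x$ with $n \to \infty$ as $\gamma \to 0$ at fixed $\delta$ and $x$, and sets $\delta = |\alpha|$.

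Next I would evaluate three asymptotic ingredients separately. (i) Differentiating Eq.~(\ref{FHdeltagamma}) and using $F_H(t)\to 1$ gives $f_H(\tau)\sim \gamma\delta e^{\delta\tau}/(e^{\delta\tau}-1)^2$, whence $n f(\tau) \to x\mu(\tau;\delta)/(2\beta(\tau;\delta))$ with $\beta,\mu$ as in Eq.~(\ref{betaDef}). (ii) The falling factorial satisfies $(n-1)!/(n-k)! \sim n^{k-1}$, so the prefactor times the product of $k-1$ densities converges to $\prod_{j=2}^k x\mu(\tau_j;\delta)/(2\beta(\tau_j;\delta))$, the powers of $n$ cancelling exactly. (iii) From $1 - F_H(\tau) \sim \gamma/(e^{\delta\tau}-1)$ one gets $\log F(\tau_k) \sim -\gamma\big(1/(e^{\delta\tau_k}-1) - 1/(e^{\delta t}-1)\big)$, and multiplying by $n-k \sim n$, the coupling $n\gamma\to 2\delta x$ together with $1/(e^{\delta\tau}-1) = 1/(2\delta\beta(\tau;\delta))$ yields $F(\tau_k)^{n-k} \to \exp\{-(x/\beta(\tau_k;\delta) - x/\beta(t;\delta))\}$. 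Collecting these produces Eq.~(\ref{jointTkGivenx}), and the interchange-invariance of Eq.~(\ref{FHdeltagamma}) under $\lambda \leftrightarrow \mu$ shows the identical formula holds in the subcritical case with $\delta = |\alpha|$. The marginal Eq.~(\ref{TkGivenx}) follows by the same limiting procedure applied to Eq.~(\ref{Tk_density}), where the extra factor $(1-F(\tau))^{k-2} \sim \epsilon^{k-2}(1/\beta(\tau;\delta) - 1/\beta(t;\delta))^{k-2}$ combines with $n^{k-1}$ to give the weight $(x/\beta(\tau;\delta) - x/\beta(t;\delta))^{k-2}/(k-2)!$.

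The delicate step is ingredient (iii): the factor $F(\tau_k)^{n-k}$ is an indeterminate $1^\infty$ form, so I must track $(n-k)\log F(\tau_k)$ to precisely the order at which $n\gamma$ stabilises, and justify that the $\mathcal{O}(\gamma^2)$ corrections to $\epsilon$ in Eq.~(\ref{gammaDeltaToAlphaEpsilon}) and the higher-order terms in the expansion of $\log F$ vanish in the limit. A secondary point requiring care is the rigorous identification of conditioning on $X(t) = x$ with the coupled limit $\epsilon n \to x$, i.e.\ that the conditional law of $(T_2,\ldots,T_k)$ given $n = \lfloor x/\epsilon\rfloor$ leaves converges to the stated density for the diffusion; this rests on the convergence of the BD process to the Feller diffusion underlying Eq.~(\ref{BDlimit}).
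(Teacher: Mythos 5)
Your proposal is correct and follows essentially the same route as the paper's own proof: both take the $\gamma \to 0$ limit of the CPP densities Eqs.~(\ref{T2_to_Tk_density}) and (\ref{Tk_density}) with the BD node-height \emph{cdf} Eq.~(\ref{FHdeltagamma}), impose the conditioning $X(t)=x$ via the coupling $n\gamma \to 2\delta x$ (the paper writes this as $\gamma = 2\delta x/n$, $n\to\infty$), and assemble exactly the same four asymptotic factors, namely $(n-1)!/(n-k)! \sim n^{k-1}$, $nf(\tau) \to x\mu/(2\beta)$, $F(\tau_k)^{n-k} \to e^{-(x/\beta(\tau_k)-x/\beta(t))}$, and $1-F(\tau) \sim n^{-1}\bigl(x/\beta(\tau)-x/\beta(t)\bigr)$. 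The delicate points you flag (the $1^\infty$ form and the identification of the conditioning with the coupled limit) are handled at the same informal asymptotic level in the paper, so there is no gap relative to its standard of rigour.
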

The proof of Theorem~\ref{thm:JointTkGivenX} is given in \ref{sec:JointTkProof}.  
In addition we have
\begin{theorem} 
Consider a Feller diffusion $\big(X(u)\big)_{u \in [0, t]}$ with parameter $\alpha \in \mathbb{R}$, 
descended from a single founder at time zero. Independent of the final population $X(t)$, the density of the coalescent time $T_i$ conditional 
on $T_j$ for $i < j$ is
\begin{eqnarray}	\label{Ti_given_TjFeller}
f_{T_i \mid T_1 = t, T_j = s}(\tau) & = & \frac{(j - 2)!}{2(i - 2)!(j - i - 1)!} \times\nonumber \\
	& &  \quad \mu(\tau) \frac{\beta(s)^{i - 1} \beta(t)^{j - i}}{\beta(\tau)^{j - 2}} 
						\frac{(\beta(\tau) - \beta(s))^{j - i - 1}(\beta(t) - \beta(\tau))^{i - 2}}
								{(\beta(t) - \beta(s))^{j - 2}}, \nonumber \\
	& & \qquad\qquad\qquad\qquad  2 \le i < j \le n, \quad t > \tau > s > 0.  
\end{eqnarray}
where we have written $\beta(\cdot)$ for $\beta(\cdot\, ; |\alpha|)$ and $\mu(\cdot)$ for $\mu(\cdot\, ; |\alpha|)$.
\end{theorem}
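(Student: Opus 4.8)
The plan is to reduce the statement to the general-CPP identity Eq.~(\ref{Ti_given_Tj}) and then pass to the Feller limit exactly as in the proof of Theorem~\ref{thm:JointTkGivenX}. The essential feature of Eq.~(\ref{Ti_given_Tj}) is that it is \emph{already independent of the leaf number} $n$; since conditioning a Feller diffusion on $X(t)=x$ is realised by the scaling $\gamma = 2\delta x/n$ at fixed $\delta = |\alpha|$ followed by $n\to\infty$, the conditional density $f_{T_i\mid T_1=t,T_j=s}$ in the diffusion limit is obtained merely by substituting the limiting forms of the normalised functions $F$ and $f$ of Eq.~(\ref{normalisedFH}) into Eq.~(\ref{Ti_given_Tj}). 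No new conditioning argument beyond the one used for Theorem~\ref{thm:JointTkGivenX} is needed, and the $\alpha=0$ case is recovered as the $\delta\to0$ limit with the values of $\beta$ and $\mu$ given in Eq.~(\ref{betaDef}).

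First I would record the leading-order behaviour as $n\to\infty$ of the four ingredients appearing in Eq.~(\ref{Ti_given_Tj}), reusing verbatim the estimates already computed in the proof of Theorem~\ref{thm:JointTkGivenX}. Writing $\eta := \gamma/(2\delta) = x/n$ and abbreviating $\beta(\cdot)=\beta(\cdot\,;|\alpha|)$, $\mu(\cdot)=\mu(\cdot\,;|\alpha|)$, these are
\begin{equation*}
f(\tau)\sim \eta\,\frac{\mu(\tau)}{2\beta(\tau)},\qquad 1-F(\tau)\sim \eta\,\frac{\beta(t)-\beta(\tau)}{\beta(\tau)\beta(t)},\qquad 1-F(s)\sim \eta\,\frac{\beta(t)-\beta(s)}{\beta(s)\beta(t)},
\end{equation*}
together with $F(\tau)-F(s)=(1-F(s))-(1-F(\tau))\sim \eta\,(\beta(\tau)-\beta(s))/(\beta(s)\beta(\tau))$.

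Next I would substitute these into Eq.~(\ref{Ti_given_Tj}) and collect powers of $\eta$. The net exponent is $(j-i-1)+(i-2)-(j-2)+1=0$, so every factor of $\eta=x/n$ cancels. This single computation does double duty: it shows that the $n\to\infty$ limit is finite, so that the conditional coalescent density survives the diffusion limit, and it shows that the surviving expression carries no $x$, which is exactly the asserted independence from the final population $X(t)$. It then remains to gather the powers of $\beta(\tau)$, $\beta(s)$, $\beta(t)$ and the factorial prefactor; the bookkeeping yields net exponents $-(j-2)$, $i-1$, $j-i$ for $\beta(\tau)$, $\beta(s)$, $\beta(t)$ respectively, the surviving factor $\mu(\tau)/2$, and the three difference terms $(\beta(\tau)-\beta(s))^{j-i-1}$, $(\beta(t)-\beta(\tau))^{i-2}$ and $(\beta(t)-\beta(s))^{-(j-2)}$, which assemble into Eq.~(\ref{Ti_given_TjFeller}).

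The one step I expect to require genuine care, rather than routine simplification, is the verification that the net $\eta$-exponent vanishes: it is this cancellation that underwrites the ``independent of $X(t)$'' clause, and it is easy to spoil by an off-by-one error in the binomial indices $i$ and $j$. As an independent check I would pass to the variable $y=1/\beta(\tau)-1/\beta(t)$, under which $f\,d\tau\sim\eta\,dy$ up to sign and each of the three $F$-differences becomes a multiple of $\eta$ times a linear expression in $y$; Eq.~(\ref{Ti_given_TjFeller}) then collapses to a rescaled Beta density in $y$ on $(0,\,1/\beta(s)-1/\beta(t))$, confirming both the prefactor and normalisation to unity.
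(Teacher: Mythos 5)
Your proposal is correct and follows exactly the paper's own route: the paper's proof consists of the single remark that the result follows from Eq.~(\ref{Ti_given_Tj}) by the same limiting behaviours used for Theorem~\ref{thm:JointTkGivenX}, which is precisely the substitution-and-bookkeeping you carry out (your exponent counts for $\eta$, $\beta(\tau)$, $\beta(s)$, $\beta(t)$ all check out). Your added verification that the net $\eta$-power vanishes, and the Beta-density normalisation check, are sound extras the paper leaves implicit.
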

\begin{proof}
The result follows from Eq.(\ref{Ti_given_Tj}) using the same limiting behaviours as in the previous theorem.
\end{proof}

\begin{remark}	\label{RemarkCrespo}
\citet[Section~3]{Crespo21} refer to the above limiting process as the Limit Birth-Death process, but appear to be unaware that it 
corresponds to a Feller diffusion.  To convert from the $t \to \infty$ limit of Eq.~(\ref{TkGivenx}) to the notation of \citet[Eq.~5]{Crespo21}, 
replace $2|\alpha| x$ with $\Gamma$, $|\alpha| \tau$ with $\Delta u$, and $k$ with $l + 1$ to allow for the fact that \citet{Crespo21} have 
assumed an improper uniform$[0, \infty]$ prior on $T_1$ (see Eq.~(\ref{fgivenT1tofunif})).  To convert from the $t \to \infty$ limit of 
Eq.~(\ref{Ti_given_TjFeller}) to \citet[Eq.~4]{Crespo21}, replace $|\alpha| s$ with $\Delta u$, $|\alpha| \tau$ with $\Delta (u + v)$, 
$i$ with $l + 1$ and $j$ with $k + 2$.  
\end{remark}
%
%

\section{Sampling}
\label{sec:Sampling}

The following quote from \citet[Section~1.1, final paragraph]{Lambert18} summarises two sampling schemes of interest.  
\begin{quote}
In the biology literature, there are mainly two classical sampling schemes [\ldots].  The first scheme, called the {\em Bernoulli sampling} scheme, 
consists of selecting each extant tip with the same probability.  The second scheme, called {\em $k$-sampling} scheme, consists of drawing 
uniformly $k$ tips among the extant tips of the splitting tree conditioned upon [the number of extant tips] $> k$.  
\end{quote}
Bernoulli sampling is a method employed in survey statistics, database analysis and modelling binary outcomes, popular for its ease of 
implementation and scalability, but with the disadvantage that the sample size is necessarily random.  Our interest in the current 
paper however lies in exploiting a mathematical connection between Bernoulli sampling and CPPs.

More specifically, \citet[Section~3.3]{Lambert13b} have shown that a Bernoulli sampled CPP with sampling rate $\rho \in [0, 1]$ is itself a CPP with typical node height
$H_\rho$, say, with inverse tail distribution and {\em cdf} 
\begin{equation}	\label{FHRho}
\begin{split}
F^{\rm IT}_{H_\rho}(\tau) 	&= 1 - \rho + \rho F^{\rm IT}_H(\tau), \\
F_{H_\rho}(\tau) 		&= \frac{\rho F_H(t)}{1 - (1 - \rho)F_H(t)}.  
\end{split}
\end{equation}
On the other hand, a $k$-sampled CPP is not a CPP.  The full story of $k$-sampling is more complicated \citep[Sections~3 and 4]{Lambert18}.  
We explore $k$-sampling for a Feller diffusion in Subsections~\ref{sec:kSamplingFeller}, \ref{sec:kSampledT2toTk} and \ref{sec:kSampledT2toTkXtConditioned}.  

%

\subsection{Bernoulli sampling for a BD process}
\label{sec:BSamplingBD}

Coalescent times for a conditioned Bernoulli-sampled linear BD process were first calculated directly from Bayes' law by \citet{Stadler09}.  
However, use of CPPs makes the task considerably easier.  

The typical node height {\em cdf} for a Bernoulli sampled BD process with constant rates $\lambda$ and $\mu$ and sampling rate $\rho$ is, 
from Eqs.(\ref{birthDeathFH}) and (\ref{FHRho}),
\[
F_{H_\rho}(\tau) = \begin{cases}
	\frac{\rho\lambda(e^{(\lambda - \mu)\tau} - 1)}{\rho\lambda e^{(\lambda - \mu)\tau} - \mu + \lambda(1 - \rho)} 	& \lambda \ne \mu; \\
	\frac{\rho\lambda\tau}{1 + \rho\lambda\tau}.			& \lambda = \mu, 
	\end{cases} \quad\tau \ge 0. 
\]
As $\tau \to \infty$ we have 
\[
F_{H_\rho}(\infty) = \begin{cases}
	1	& \lambda \ge \mu; \\
	\frac{\rho\lambda}{\mu - \lambda(1 - \rho)}.			& \lambda < \mu. 
	\end{cases}    
\]
Following the logic leading to Eq.~(\ref{FHaltBD}), to condition on convergence of the RRP to a single ancestor in the sub-critical case, 
define an alternate node height {\em cdf} for $\lambda \ne \mu$ by 
\begin{eqnarray*}	
F_{H_\rho}^{\rm BD}(\tau) & := & \frac{F_{H_\rho}(\tau)}{F_{H_\rho}(\infty)} \nonumber \\ 
	& = & \begin{cases} 
			\rho\lambda\frac{e^{(\lambda - \mu)\tau} - 1}{\rho\lambda e^{(\lambda - \mu)\tau} -[\mu - \lambda(1 - \rho)]} 	& \lambda > \mu; \\
			[\mu - \lambda(1 - \rho)]\frac{e^{(\lambda - \mu)\tau} - 1}{\rho\lambda e^{(\lambda - \mu)\tau} - [\mu - \lambda(1 - \rho)]} 	& \lambda < \mu; 
		\end{cases} \nonumber \\
	& = & \begin{cases} 
			 \frac{e^{(\lambda - \mu)\tau} - 1}{e^{(\lambda - \mu)\tau} - 1 + \frac{\lambda - \mu}{\rho\lambda}} 	& \lambda > \mu; \\
			 \frac{e^{(\mu - \lambda)\tau} - 1}{e^{(\mu - \lambda)\tau} - 1 + \frac{\mu - \lambda}{\mu - (1 - \rho)\lambda}} 	& \lambda < \mu. 
		\end{cases}
\end{eqnarray*} 
The two cases can be combined into a single form 
\begin{equation}	\label{FHrhoAltBD}
F_{H_\rho}^{\rm BD}(\tau) = \frac{e^{\delta\tau} - 1}{e^{\delta\tau} - 1 + \gamma_\rho}, \qquad \tau, \delta, \gamma_\rho > 0, 
\end{equation}
where $\delta$ is defined by Eq.~(\ref{WiufParams}) and 
\begin{equation}	\label{WiufParamsRho}
\gamma_\rho = \frac{|\lambda - \mu|}{\max(\rho\lambda, \mu - (1 - \rho)\lambda)}.  
\end{equation}
This form of the node height {\em cdf} was found previously by \citet[Eq.~(13)]{Wiuf18} by working from the distribution of sampled 
particles rather than the associated CPP.  It takes the same form as Eq.~(\ref{FHdeltagamma}), and hence leads to an identical 
set of coalescent time distributions up to a transformation of parameters.  \citet[Sections~3.3 and 6.1]{Wiuf18} makes the point that, of the three 
parameters $(\mu, \nu\ \rho)$, only two degrees of freedom are identifiable, and identifies symmetries resulting from reversal of birth and 
death rates.  The same symmetries have also been identified by \citet{Stadler19}, and, for the supercritical case, by \citet[Remark~3.4]{Stadler09}.  

%

\subsection{Poisson sampling for a Feller diffusion}
\label{sec:PSamplingFeller}

To approach the Feller diffusion limit of a Bernoulli-sampled BD process in a way consistent with the scaling limit $\epsilon \to 0$ defined in 
Eq.~(\ref{BDlimit}), define an $\epsilon$-dependent Bernoulli sampling rate 
\[
\rho_\epsilon := 2\nu\epsilon, \quad \nu > 0, 
\]
and take the limit $\epsilon, \rho_\epsilon \to 0$ while keeping the newly introduced parameter $\nu$ fixed.  
We will refer to the limiting sampling scheme\footnote{Not to be confused with the use of the term ``Poisson sampling'' in the survey sampling literature, 
which refers to a generalised form of Bernoulli sampling in which the sampling rate varies with each sampled individual~\citep[Section~3]{Tille17}.} 
as {\em Poisson sampling with parameter $\nu$}, because the scaling is chosen so that, 
in the diffusion limit $X(t) = \lim_{\epsilon \to 0} \epsilon N_\epsilon(t)$, the sample size conditioned on a population size $X(t) = x$ 
is distributed asymptotically as 
\[
{\text{Binom}(x/\epsilon, 2\epsilon\nu) \sim \text{Pois}(2\nu x)}. 
\]
Physically, this means that, if the observed scaled population is $x \in \mathbb{R}_{\ge 0}$, randomly sample $N$ individuals from the population, 
where $N \sim \text{Pois}(2\nu x)$.  The scheme is intended primarily as a mathematical expedient for approaching the Feller diffusion.  

In the diffusion limit of the Bernoulli-sampled BD process, $\delta = |\lambda_\epsilon - \mu_\epsilon| = |\alpha|$ is $\epsilon$-independent as before, 
and the parameter $\gamma_\rho$ has a finite limit.  From Eqs.~(\ref{BDlimit}) and (\ref{WiufParamsRho}) 
\begin{eqnarray}	\label{gammaNuDef} 
\gamma_\nu & := & \lim_{\epsilon \to 0} \gamma_\rho \nonumber \\
	& = & \lim_{\epsilon \to 0} \frac{|\lambda_\epsilon - \mu_\epsilon|}
				{\max(\rho_\epsilon\lambda_\epsilon, \mu_\epsilon - (1 - \rho_\epsilon)\lambda_\epsilon)} \nonumber \\
	& = & \frac{|\alpha|}{\max(\nu, \nu - \alpha)} \nonumber \\
	& = & \begin{cases}
			\frac{\alpha}{\nu}, & \alpha > 0; \\
			\frac{\alpha}{\alpha - \nu}, & \alpha < 0. \\	
			\end{cases} 
\end{eqnarray} 
The coalescent tree of a Poisson-sampled Feller diffusion can therefore be constructed from a CPP with node height {\em cdf} 
Eq.~(\ref{FHrhoAltBD}), but with $\gamma_\rho$ replaced by $\gamma_\nu$, that is 
\begin{eqnarray}	
F_{H_\nu}^{\rm Feller}(\tau) & = & \frac{e^{\delta\tau} - 1}{e^{\delta\tau} - 1 + \gamma_\nu}, \qquad \tau, \delta, \gamma_\nu > 0  	\label{FHnuFeller} \\
	& = & \begin{cases}
			\displaystyle\frac{2\beta(\tau; \alpha)}{2\beta(\tau; \alpha) + 1/\nu}	& \alpha > 0;	\\ \\
			\displaystyle\frac{\nu - \alpha}{\nu}\frac{2\beta(\tau; \alpha)}{2\beta(\tau; \alpha) + 1/\nu}	& \alpha < 0; \\ \\
			\displaystyle\frac{\tau}{\tau + 1/\nu} & \alpha = 0, 
			\end{cases}		\label{FHnuAltFeller}			
\end{eqnarray}
where $\beta(\tau; \alpha)$ is defined by Eq.~(\ref{betaDef}).  
  
From Eq.~(\ref{gammaNuDef}) it is obvious that $\gamma_\nu$ and $\delta$ remain invariant under the transformation 
\[
(\alpha, \nu) \rightarrow (\alpha', \nu') = (-\alpha, \nu - \alpha),  
\]
and that, provided $\nu - \alpha > 0$, this defines a new Poisson-sampled Feller process with the same distribution of 
coalescent times.  This can be summarised as: 
\begin{corollary}
Given a Poisson-sampled super-critical Feller diffusion with parameters $\nu > \alpha > 0$, there exists a Poisson-sampled 
sub-critical Feller diffusion with Feller parameter $-\alpha < 0$, and sampling rate $\nu - \alpha > 0$ with the same distribution of coalescent times.  

Given a Poisson-sampled super-critical Feller diffusion with parameters $\alpha > \nu > 0$, there does not exist a Poisson-sampled 
sub-critical Feller diffusion with the same distribution of coalescent times.  

Given a Poisson-sampled sub-critical Feller diffusion with parameters $\alpha < 0$, $\nu > 0$, there exists a Poisson-sampled 
super-critical Feller diffusion with Feller parameter $-\alpha > 0$, and sampling rate $\nu - \alpha > 0$ with the same distribution of coalescent times.  
\end{corollary}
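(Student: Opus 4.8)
The plan is to leverage the invariance, noted in the paragraph preceding the corollary, of the pair $(\delta, \gamma_\nu)$ under the map $(\alpha, \nu) \mapsto (-\alpha, \nu - \alpha)$, together with the fact that the coalescent time distributions depend on $(\alpha, \nu)$ only through this pair. First I would record the key equivalence: by Eq.~(\ref{FHnuFeller}) the node height cdf $F_{H_\nu}^{\rm Feller}$ is a function of $(\delta, \gamma_\nu)$ alone, and by the CPP construction of Section~\ref{sec:CPPs} all coalescent time distributions are determined by the node height cdf; conversely the cdf is recovered from those distributions, and the assignment $(\delta, \gamma_\nu) \mapsto F_{H_\nu}^{\rm Feller}$ is injective, since $e^{\delta\tau} - 1 = \gamma_\nu F/(1 - F)$ evaluated at two values of $\tau$ returns both parameters. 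Hence two Poisson-sampled Feller diffusions have the same coalescent time distributions \emph{if and only if} they share the same $(\delta, \gamma_\nu)$.

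Granting this, the two existence statements are immediate. For the first, with $\nu > \alpha > 0$, applying the transformation gives $(\alpha', \nu') = (-\alpha, \nu - \alpha)$ with $\nu' > 0$ and $\alpha' < 0$, so the image is a legitimate Poisson-sampled sub-critical diffusion sharing $(\delta, \gamma_\nu)$, hence the same coalescent times. For the third, with $\alpha < 0$ and $\nu > 0$, the same transformation yields $\nu' = \nu - \alpha > \nu > 0$ and $\alpha' = -\alpha > 0$, a legitimate super-critical diffusion with the same $(\delta, \gamma_\nu)$. In both cases the only thing to verify is the sign of $\nu - \alpha$, which is positive by hypothesis.

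For the non-existence claim (the second statement) I would argue by contradiction. Suppose some sub-critical process with parameters $(\alpha'', \nu'')$, $\alpha'' < 0$, $\nu'' > 0$, had the same coalescent time distributions as the given super-critical process with $\alpha > \nu > 0$. By the equivalence above, matching $\delta = |\alpha''| = |\alpha|$ forces $\alpha'' = -\alpha$; substituting into $\gamma_\nu = \alpha''/(\alpha'' - \nu'')$ and equating to the super-critical value $\alpha/\nu$ then forces $\nu'' = \nu - \alpha$, which is negative since $\alpha > \nu$, contradicting $\nu'' > 0$. Equivalently, every sub-critical Poisson-sampled diffusion has $\gamma_\nu = |\alpha''|/(|\alpha''| + \nu'') < 1$, whereas $\alpha > \nu$ gives $\gamma_\nu = \alpha/\nu > 1$, so no match can exist.

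The main obstacle is conceptual rather than computational: the non-existence claim is only as strong as the assertion that identical coalescent time distributions force identical $(\delta, \gamma_\nu)$. I would therefore take care to establish the injectivity and recoverability of the node height cdf once at the outset, so that the argument rules out \emph{all} candidate sub-critical processes and not merely the single natural transform. Once that equivalence is in place, the three cases collapse to the elementary sign analysis of $\nu - \alpha$ sketched above.
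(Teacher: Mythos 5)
Your proposal is correct and takes essentially the same route as the paper: the corollary there is presented as an immediate consequence of the invariance of $(\delta, \gamma_\nu)$ under $(\alpha, \nu) \rightarrow (-\alpha, \nu - \alpha)$ noted just before it, combined with the fact that the coalescent time distributions depend on $(\alpha,\nu)$ only through the node-height {\em cdf} of Eq.~(\ref{FHnuFeller}). Your one genuine addition is making explicit the identifiability step (that equal coalescent time distributions force equal $(\delta,\gamma_\nu)$, since the node-height {\em cdf} is recoverable and the map $(\delta,\gamma_\nu)\mapsto F_{H_\nu}^{\rm Feller}$ is injective), which the paper leaves implicit but which is exactly what the non-existence claim, with its quantification over \emph{all} sub-critical processes rather than just the image of the transformation, requires.
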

These symmetries are analogues of the symmetries set out in \citet[Theorem~8]{Stadler19} for a Bernoulli-sampled BD process.  

%

\subsection{$k$-sampling for a Feller diffusion}
\label{sec:kSamplingFeller}

\citet{Lambert18} gives a procedure for determining the coalescent properties of a $k$-sampled population resulting from a CPP 
in terms of the corresponding properties of a Bernoulli-sampled population
conditioned on a sample size $k$.  Lambert's main result is the following theorem.  
\begin{theorem} \citep[Thm.~3]{Lambert18}	\label{thm:Lambert}
Consider a CPP with node height $H$ stopped at time $t$.  Fix $k \ge 1$.  Let $\Pi_k$ denote the law of the tree generated by $k$ tips 
sampled uniformly from the tips of the CPP (conditioned to have at least $k$ tips), and for each $\rho \in (0, 1)$ let $\Gamma_{\rho, k}$ denote 
the law of the Bernoulli-sampled CPP with sampling probability $\rho$, conditioned to have $k$ tips.  Then 
\[
\Pi_k = \int_0^1 \Gamma_{\rho, k}\, \mu_k(d\rho), 
\]
where $\mu_k$ is the probability distribution defined by 
\begin{equation}	\label{muKDRhoDef}
\mu_k(d\rho) := \frac{k(1 - a)\rho^{k - 1}}{(1 - a(1 - \rho))^{k + 1}} d\rho, \quad \rho \in (0, 1), 
\end{equation}
and $a = \mathbb{P}(H \le t)$.  
\end{theorem}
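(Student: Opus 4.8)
The plan is to condition on the hidden total number of tips $N$ of the unsampled CPP and thereby reduce the claimed measure identity to a scalar identity about the law of $N$. Write $a = F_H(t) = \mathbb{P}(H \le t)$, so that the unsampled CPP has $\mathbb{P}(N = n) = a^{n-1}(1-a)$ for $n \ge 1$. The first step is to observe that, \emph{given} $N = n$, both sampling schemes produce the same random $k$-tip tree. Under $k$-sampling the retained tips are by definition a uniformly chosen size-$k$ subset of the $n$ tips; under Bernoulli sampling conditioned on exactly $k$ successes, exchangeability of the independent coin flips makes the retained set again a uniformly chosen size-$k$ subset, since every such subset carries the same pre-conditioning probability $\rho^k(1-\rho)^{n-k}$. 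Because the induced coalescent tree on a set of tips is a deterministic functional of the full CPP and that subset, the two schemes share a common conditional law $P^{\mathrm{sub}}_n$ of the $k$-tip tree given $N = n$.

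Consequently both $\Pi_k$ and every $\Gamma_{\rho, k}$, and hence the mixture $\int_0^1 \Gamma_{\rho, k}\, \mu_k(d\rho)$, are mixtures of the single family $\{P^{\mathrm{sub}}_n\}_{n \ge k}$, so it suffices to match the mixing weights in $n$. On the $k$-sampling side, conditioning on $N \ge k$ gives weight $\mathbb{P}(N = n \mid N \ge k) = a^{n-k}(1-a)$. On the Bernoulli side, sampling at rate $\rho$ and conditioning on $B := \mathrm{Binom}(N, \rho) = k$ gives a posterior on $N$; I would compute $\mathbb{P}(B = k)$ using the negative-binomial generating function $\sum_{j \ge 0}\binom{k+j}{k}x^j = (1-x)^{-(k+1)}$ with $x = a(1-\rho)$, obtaining
\[
\mathbb{P}(N = n \mid B = k) = a^{n-k}\binom{n}{k}(1-\rho)^{n-k}\bigl(1 - a(1-\rho)\bigr)^{k+1}.
\]

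The theorem then reduces to verifying, for each $n \ge k$,
\[
\int_0^1 \mathbb{P}(N = n \mid B = k)\, \mu_k(d\rho) = a^{n-k}(1-a).
\]
Inserting the definition of $\mu_k$, the factors $\bigl(1 - a(1-\rho)\bigr)^{k+1}$ cancel, leaving a Beta integral $\int_0^1 \rho^{k-1}(1-\rho)^{n-k}\, d\rho = (k-1)!\,(n-k)!/n!$; the binomial and factorial prefactors then collapse to $a^{n-k}(1-a)$, as required. Summing this identity over $n \ge k$ incidentally confirms that $\mu_k$ is a probability measure.

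I expect the scalar computation to be routine, so the main obstacle is the conditioning/exchangeability argument of the first step. One must be careful that $k$-sampling conditions only on $N \ge k$ whereas Bernoulli-then-condition conditions on $B = k$ (which already forces $N \ge k$), and that the reduced tree depends on the data solely through the chosen subset, so that equality of the subset-selection distributions genuinely upgrades to equality of the tree laws $P^{\mathrm{sub}}_n$. Once this reduction is secured, the mixture identity follows entirely from the scalar weight computation.
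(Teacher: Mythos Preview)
The paper does not supply its own proof of this theorem: it is quoted verbatim as \citep[Thm.~3]{Lambert18} and then used as input to the Feller-diffusion limit in Section~\ref{sec:kSamplingFeller}. So there is no ``paper's proof'' to compare against.

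That said, your argument is correct and is essentially the natural proof. The key reduction---that, conditional on the unsampled tree having $N=n$ tips, uniform $k$-sampling and Bernoulli-$\rho$ sampling conditioned on $k$ successes both select a uniformly random $k$-subset, hence induce the same law $P^{\mathrm{sub}}_n$ on the reduced tree---is sound, and the scalar identity you derive is exactly right: the posterior $\mathbb{P}(N=n\mid B=k)=a^{n-k}\binom{n}{k}(1-\rho)^{n-k}(1-a(1-\rho))^{k+1}$ follows from the negative-binomial sum, the $(1-a(1-\rho))^{k+1}$ factors cancel against $\mu_k$, and the remaining Beta integral together with the binomial coefficient collapses to $a^{n-k}(1-a)=\mathbb{P}(N=n\mid N\ge k)$. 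Your caveats about the conditioning events ($N\ge k$ versus $B=k$) and about the reduced tree being a deterministic functional of the subset are the right things to check, and both hold here.
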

\begin{remark}	\label{Remark:BernSampl}
Note that the two laws,  $\Pi_k$ and $\Gamma_{\rho, k}$, differ because of the random variability in the number of tips of the full unsampled tree.  If the full 
tree were further conditioned to have a given number $n \ge k$ tips, say, then Bernoulli sampling with rate $\rho = k/n$ conditioned on $k$ sampled tips 
would be precisely $k$-sampling from a population of size $n$.  
\end{remark}
The following theorem gives the analogous measure to Eq.~(\ref{muKDRhoDef}) for calculating coalescent properties of a $k$-sampled Feller diffusion 
in terms of the of the coalescent properties of a Poisson-sampled a Feller diffusion.  
\begin{theorem}	\label{thm:BernoulliToPoisson}
Consider a Feller diffusion $\big(X(u)\big)_{u \in [0, t]}$ with parameter $\alpha \in \mathbb{R}$ descended from a single ancestor at time zero and 
stopped at time $t$.  Let $\Pi^{(\alpha)}_k$ denote the law of the coalescent tree of $k$ individuals sampled uniformly from the population $X(t)$, 
and for each $\nu > 0$ let $\Gamma^{(\alpha)}_{\nu, k}$ denote the law of the coalescent tree of the Poisson-sampled population $X(t)$ with sampling 
parameter $\nu$, conditioned to have $k$ tips.  Then 
\[
\Pi^{(\alpha)}_k = \int_0^\infty \Gamma^{(\alpha)}_{\nu, k}\, \mu_k(d\nu), 
\]
where 
\begin{equation}		\label{mukOfDnu}
\mu_k(d\nu) =  \frac{k v^{k - 1}}{(1 + v)^{k + 1}}  dv
		, \quad v \in (0, \infty),  
\end{equation}
and
\[
v = 2\nu\beta(t; |\alpha|).  
\]
\end{theorem}
The proof of Theorem~\ref{thm:BernoulliToPoisson} is given in \ref{sec:proofBToP}.  

%

\subsection{Distribution of coalescent times for a $k$-sampled Feller diffusion without conditioning on $X(t) = x$}
\label{sec:kSampledT2toTk}

As an example of the use of Eq.~(\ref{mukOfDnu}) we calculate the joint distribution of coalescent times for a $k$-sampled Feller diffusion 
$\big(X(\tau)\big)_{\tau \in [0, t]}$ initiated from a single ancestor at time 0 and stopped at time $t$.  We do not condition on a specific value
for $X(t)$, the point being that Theorem~\ref{thm:Lambert} requires only that the population size is at least $k$, which will always be the case 
in the diffusion limit if non-extinction is assumed.  This is identical to the problem of coalescent times for a long-term, near critical limit of a BD 
process addressed by \citet[Theorem~3]{Harris20}.  
\begin{theorem}	\label{AltHarrisTheorem}
Consider a Feller diffusion $\big(X(\tau)\big)_{\tau \in [0, t]}$ with parameter $\alpha$ initiated from a single ancestor at time 0 and stopped at time $t$.  The joint 
distribution of coalescent times for a sample of size $k$ drawn uniformly from the final population $X(t)$ is 
\[
F_{T_2\ldots T_k}^{k\text{-}{\rm sample}}(\tau_2, \ldots, \tau_k \mid T_1 = t) 
	= k! \left( C - \sum_{j = 2}^k G_j \log B(\tau_j) \right),   \qquad t > \tau_2 > \ldots > \tau_k > 0, 
\]
where 
\[
B(\tau) = \frac{\beta(t; \alpha)}{\beta(\tau; \alpha)}, 
\]
\[
C = \prod_{j = 2}^k \frac{\beta(\tau_j; \alpha)}{\beta(\tau_j; \alpha) - \beta(t; \alpha)}, 
\]
and 
\[
G_i = - \frac{\beta(\tau_j; \alpha)\beta(t; \alpha)}{(\beta(\tau_j; \alpha) - \beta(t; \alpha))^2} 
							\prod_{l = 2:k, \, l \ne i} \frac{\beta(\tau_l; \alpha)}{\beta(\tau_l; \alpha) - \beta(t; \alpha)}, 
\]
where $\beta(\tau; \alpha)$ is defined by Eq.~(\ref{betaDef}).  
\end{theorem}

The proof, starting from Theorem~\ref{thm:BernoulliToPoisson}, is given in \ref{sec:HarrisProof}.  
This result agrees with \citet[Theorem~3]{Harris20}, up to a factor $(k - 1)!$ to account for Harris et al.'s unordered coalescent 
times (defined at \citet[p1371]{Harris20}) compared 
with our ordered coalescent times.  To convert from our notation to that of Harris et al., replace $e^{\alpha\tau_j} - 1$ with $E_{j - 1}$ for $j = 2, \ldots, k$ and 
$e^{\alpha t} - 1$ with $E_0$ (see \citet[Table~C.1]{BurdenGriffiths25} for details).  To get the critical case use $\beta(\tau; 0) = \tau/2$.  

For $k = 2$, we have $C = -1/(B(\tau_2) - 1)$ and $G_2 = -B(\tau_2)/(1 - B(\tau_2))^2$ by interpreting the product of no factors as 1.  For the time $T_2$ since the MRCA 
of a random sample of 2 individuals in a population we get 
\begin{equation}	\label{T2_2Sample}
F_{T_2}^{2\text{-}{\rm sample}}(\tau \mid T_1 = t) = \frac{2\beta(\tau)}{\beta(\tau) - \beta(t)} 
				+ \frac{2\beta(\tau)\beta(t)}{(\beta(\tau) - \beta(t))^2} \log\frac{\beta(t)}{\beta(\tau)}, 
\end{equation}
which agrees with a result obtained by \citet[Eq.~(14)]{BurdenGriffiths25} from coalescent properties deduced from the solution to the Feller diffusion 
forward Kolmogorov equation.  

%
\subsection{Distribution of coalescent times for a $k$-sampled Feller diffusion with conditioning on $X(t) = x$}
\label{sec:kSampledT2toTkXtConditioned}

Theorem~\ref{thm:BernoulliToPoisson} is not directly applicable when conditioning on a specified final population $X(t) = x$ as it relies, via 
Theorem~\ref{thm:Lambert}, on the random variability of $X(t)$ (see Remark~\ref{Remark:BernSampl}).  An alternative approach to $k$-sampling 
is that due to \cite{Crespo21}, who compute simulated coalescent trees of $k$-sampled populations from a scaled limit of a BD process which they 
refer to as a Limit Birth-Death process.  As pointed out in Remark~\ref{RemarkCrespo} this process is equivalent to a Feller process conditioned 
on a current population $X(t) = x$ descended from a single founder at time zero.  Their method employs a result of \citet{Saunders84} to 
compute the probability distribution of coalescent times for a uniform random sample of size $k$ among the coalescent times of an infinite population.  

Using the conventions in Fig.~\ref{fig:CPP_diagram}, let the coalescent times measured back from the present be 
$0 < \tilde{T}_k < \ldots < \tilde{T}_1$ for the sample, and $ 0 < \ldots < T_2 < T_1$ for the infinite population from which the sample is taken.  
Clearly $\{\tilde{T}_l : 1 \le l \le k\} \subset \{T_i: 1 \le i < \infty\}$.  
Then for any model leading to a binary exchangeable tree, 
\begin{equation}	\label{PTtildenEqualsT}
\mathbb{P}(\tilde{T}_k = T_i) = k(k - 1) \frac{(i - 1)!(i - 2)!}{(i - k)!(i + k - 1)!}, \qquad i = k, k + 1,\ldots, 
\end{equation}
and 
\begin{eqnarray}	\label{PTtildekEqualsT}
\lefteqn{\mathbb{P}(\tilde{T}_l = T_i \mid \tilde{T}_{l + 1} = T_j) } \nonumber \\ 
	& = & l(l - 1) \frac{(j - l - 1)!(j + l - 2)!}{(j - 1)!(j - 2)!} \frac{(i - 1)!(i - 2)!}{(i - l)!(i  + l - 1)!}, \nonumber \\
	& & \qquad\qquad\qquad\qquad i = l, \ldots, j - 1; \quad l = 2, \ldots, k - 1.  
\end{eqnarray}
See \ref{sec:matchingTtildeT} for details.  

In principle, $k$-sampled coalescent trees for a Feller diffusion $\big(X(u)\big)_{u \in [0, t]}$ conditioned on a current population $X(t) = x$ 
and descended from a single founder can be simulated by first drawing a set of population coalescent time indices $\{i_l\}$ corresponding to the 
sample coalescent times $\tilde{T}_l$ from the distribution defined by Eqs.~(\ref{PTtildenEqualsT}) and (\ref{PTtildekEqualsT}), and then 
drawing coalescent times $T_{i_l}$ from the densities Eqs.~(\ref{TkGivenx}) and (\ref{Ti_given_TjFeller}).  \cite{Crespo21} have  
simulated trees corresponding to samples sample of size $k = 10$ and $100$, a range of values of $0.001 \le 2\alpha x \le 1000$, and assuming 
an improper uniform prior on $T_1$ (see Subsection~\ref{sec:T1unifPrior}).  They report that the main determinant of computational speed is 
the generation of coalescent time indices $i_l$.  They also explore the computational speed of various semi-deterministic approximations with 
varying success.  

The method is not restricted to diffusions conditioned on $X(t) = x$ but can be applied more generally.  For example, for the Feller diffusion 
$\big(X(u)\big)_{u \in [0, t]}$ conditioned on a non-extinct current population $X(t) > 0$ descended from a single founder, but otherwise unrestricted, 
\citet[Thm.~2]{BurdenGriffiths25} obtain for the density of the population coalescent time $T_i$ 
\[
f_{T_i}(\tau \mid T_1 = t) = \tfrac{1}{2}(i - 1)\frac{\mu(\tau)\beta(\tau)}{\beta(t)} \left(1 - \frac{\beta(\tau)}{\beta(t)}\right)^{i - 2}, 
			\qquad 0 < \tau < t, \quad i = 2, 3, \ldots,
\]
with $\mu(\tau)$ and $\beta(\tau)$ defined by Eq.~(\ref{betaDef}).  The {\em cdf} for $T_i$ is therefore 
\[
F_{T_i}(\tau \mid T_1 = t) = 1 - \left(1 - \frac{\beta(\tau)}{\beta(t)}\right)^{i - 1}, \qquad 0 < \tau < t, \quad i = 2, 3, \ldots
\]
The {\em cdf} for coalescent time $\tilde{T}_2$ corresponding to the MRCA  of a random sample of size 2 is then 
\begin{eqnarray*}
F_{\tilde{T}_2}^{2\text{-}{\rm sample}}(\tau \mid T_1 = t) 
	& = & \sum_{i = 2}^\infty F_{T_i}(\tau \mid T_1 = t) \times \mathbb{P}(\tilde{T}_2 = T_i) \nonumber \\
	& = & \sum_{i = 2}^\infty \left\{ 1 - \left(1 - \frac{\beta(\tau)}{\beta(t)}\right)^{i - 1} \right\} \times \frac{2}{i(i - 1)} \nonumber \\
	& = & 1 - 2 \sum_{i = 2}^\infty \left(\frac{1}{i} - \frac{1}{i - 1} \right) \left(1 - \frac{\beta(\tau)}{\beta(t)}\right)^{i - 1}, \qquad 0 < \tau < t,  
\end{eqnarray*}
from which it is straightforward to reproduce Eq.~(\ref{T2_2Sample}) with help from the identity $\sum_{j = 1}^\infty (1 - z)^j/j = \log z^{-1}$.  

%

\section{Expected inter-coalescent waiting times for trees constructed from Wiuf's node-height distribution}
\label{sec:ExpectedWkWiuf}

\citet[Section~8.2 and Appendix~G]{Wiuf18} gives an explicit formula for the inter-coalescent waiting time $W_1 = T_1 - T_2$ subject to an 
improper uniform$[0,\infty)$ prior on the time $t$ since initiation from a single ancestor, and conditioned on $n$ leaves for processes whose 
coalescent tree corresponds to a CPP with node-height {\em cdf} taking the generic form 
\begin{equation}	\label{FHWiuf}
F_H^{\rm Wiuf}(\tau) := \frac{e^{\delta\tau} - 1}{e^{\delta\tau} - 1 + \gamma}, \qquad \tau, \delta, \gamma > 0. 
\end{equation}
This includes the BD process (Eq.(\ref{FHdeltagamma})), the Bernoulli-sampled BD process (Eq.~(\ref{FHrhoAltBD})), 
and the Poisson-sampled Feller process (Eq.~(\ref{FHnuFeller})).  Wiuf's result is (see \ref{sec:EnW1Calc}) 
\begin{equation}	\label{EnOfW1}
\mathbb{E}_n^{{\rm unif}\, [0, \infty)}[W_1] = -\frac{n}{\delta} \sum_{i = 1}^{n - 1} \frac{\gamma}{i} (1 - \gamma)^{i - n} -
					\frac{n}{\delta} \frac{\gamma}{(1 - \gamma)^n} \log\gamma, \qquad \gamma \ne 1.  
\end{equation} 
The expected waiting times $W_k$ for $k = 2, \ldots, n$ can thus be computed via the iterative formula Eq.~(\ref{iterativeWkunifT1}).   
The following theorem gives an explicit closed form solution.  

%
\begin{theorem}	\label{theorem:ExpectedWk}
The solution to the recursion Eq.~(\ref{iterativeWkunifT1}) 
with boundary condition Eq.~(\ref{EnOfW1}) for the expectation of the waiting times $W_k$, is 
\begin{equation}	\label{EnOfWkUnifT1}
\mathbb{E}_n^{{\rm unif}\, [0, \infty)}[W_k] = \frac{\gamma}{\delta} \frac{1}{k} \, _2F_1(n - k + 1, 1; n + 1; 1 - \gamma), \qquad \delta, \gamma > 0, \quad 1 \le k \le n, 
\end{equation}
where \citep[Eq.~(15.1.1)]{Abramowitz:1965sf} 
\begin{equation}	\label{HypergeomDef}
_2F_1(a, b; c; z) = 
	\frac{\Gamma(c)}{\Gamma(a) \Gamma(b)} \sum_{l = 0}^\infty \frac{\Gamma(a + l) \Gamma(b + l)}{\Gamma(c + l)} \frac{z^l}{l!}, \qquad {\mathcal R}(c - a - b) > 0, 
\end{equation}
and its analytic continuation, is the ordinary hypergeometric function. 
\end{theorem}
The proof of Theorem~\ref{theorem:ExpectedWk} is given in \ref{sec:proofExpectedWk}.  In numerical calculations, Eq.~(\ref{EnOfWkUnifT1}) 
(or Eq.~(\ref{EnOfWkInfSum}) in the appendix) may be more accurate than iterating from Eq.(\ref{EnOfW1}), 
which, for large $n$ and $\gamma$ close to 1, becomes a small difference of two large numbers.  For certain values of $\gamma$ we have the following 
theorem: 
%
\begin{theorem}	\label{theorem:CasesWk}
The following particular cases apply for the expected inter-coalescent waiting times $W_k$ of the coalescent tree associated with a CPP with node 
height {\em cdf} Eq.~(\ref{FHWiuf}), subject to an improper uniform$[0, \infty)$ prior on $T_1$,  
\begin{equation}	\label{EofWgamma0}
\mathbb{E}_n^{{\rm unif}\, [0, \infty)}[W_k] \sim \begin{cases}
	- \frac{n\gamma}{\delta} \log\gamma, 	\quad k = 1; \\
	\frac{n\gamma}{\delta} \frac{1}{k(k - 1)}, 	\quad k = 2, \ldots, n, 
	\end{cases} 	 \quad\text{as } \gamma \to 0; 
\end{equation}
\begin{equation}	\label{EofWgamma1}
\left. \mathbb{E}_n^{{\rm unif}\, [0, \infty)}[W_k]\right|_{\gamma = 1} = \frac{1}{\delta k}, \quad k = 1, \ldots, n;  
\end{equation}
and 
\begin{equation}	\label{EofWgammaInf}
\mathbb{E}_n^{{\rm unif}\, [0, \infty)}[W_k] \sim \begin{cases}
	\frac{1}{\delta} \frac{n}{k(n - k)}, & k = 1, \ldots, n - 1; \\
	\frac{1}{\delta} \left( \log\gamma - \sum_{i = 1}^{n - 1} \frac{1}{i} \right), & k = n, 
	\end{cases}
		\qquad \text{as }\gamma\to\infty. 
\end{equation}
\end{theorem}
The proof of Theorem~\ref{theorem:CasesWk} is given in \ref{sec:proofCasesWk}.  Note that, up to a factor, the expected coalescent times for 
$\gamma \to 0$ match those of the Kingman coalescent for $k \ge 2$.  From Eqs.~(\ref{WiufParamsRho}) and (\ref{gammaNuDef}) we see that 
$\gamma \to \infty$ corresponds to small sampling rates in the supercritical case, specifically $\rho << (\lambda - \mu)/\lambda$ for a BD process or 
$\nu << \alpha$ for a Feller diffusion.  For an unsampled BD process and for sampled subcritical cases, $\gamma$, $\gamma_\rho$, and 
$\gamma_\nu \in [0, 1]$, and the limit is irrelevant.   

The above results assume an improper uniform prior on the time since initiation of the process.  For completeness, the following corollary to 
Theorem~\ref{theorem:ExpectedWk} covers expected waiting times for a process initiated from a single founding ancestor infinitely far in the past.    
%
\begin{corollary}	\label{cor:ExpectedTk} 
The expected waiting times for a coalescent tree corresponding to a node height {\em cdf} $F_H^{\rm Wiuf}(\tau)$ 
conditioned on $T_1 = \infty$ with $n$ leaves are 
\[
\mathbb{E}_n[W_k \mid T_1 = \infty] = \frac{\gamma}{\delta} \frac{1}{k - 1} \, _2F_1(n - k + 1, 1; n; 1 - \gamma), \qquad \delta, \gamma \ge 0, \quad 2 \le k \le n. 
\]
\end{corollary}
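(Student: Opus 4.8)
The plan is to obtain the corollary directly from Theorem~\ref{theorem:ExpectedTk} by an index shift, rather than by redoing any summation. The engine is the distributional identity Eq.~(\ref{fgivenT1tofunif}), which equates the joint law of $(T_1, \ldots, T_n)$ under a uniform$[0,t]$ prior (conditioned on $n$ leaves) with the joint law of $(T_2, \ldots, T_{n+1})$ conditioned on $T_1 = t$ and $n+1$ leaves. This is the waiting-time counterpart of the coalescent-time relation Eq.~(\ref{EgivenT1toEunif}) that I will need.

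First I would transfer the identity from coalescent times to waiting times. Writing $W_k = T_k - T_{k+1}$ with the terminal convention $T_{m+1} = 0$ in every $m$-leaf tree, Eq.~(\ref{fgivenT1tofunif}) matches each uniform-prior gap $T_k^{\rm unif} - T_{k+1}^{\rm unif}$ with the conditional gap $T_{k+1} - T_{k+2}$, giving
\[
\mathbb{E}_n^{{\rm unif}\,[0,t]}[W_k] = \mathbb{E}_{n+1}[W_{k+1} \mid T_1 = t], \qquad k = 1, \ldots, n.
\]
Letting $t \to \infty$ and relabelling $(n+1, k+1) \mapsto (n, k)$ then yields
\[
\mathbb{E}_n[W_k \mid T_1 = \infty] = \mathbb{E}_{n-1}^{{\rm unif}\,[0,\infty)}[W_{k-1}], \qquad 2 \le k \le n,
\]
the range $k = 1, \ldots, n$ on the uniform side mapping precisely to $2 \le k \le n$ on the conditional side.

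Finally I would substitute the closed form Eq.~(\ref{EnOfWkUnifT1}) of Theorem~\ref{theorem:ExpectedTk}, evaluated at arguments $(n-1, k-1)$ in place of $(n,k)$. The prefactor becomes $\tfrac{\gamma}{\delta}\tfrac{1}{k-1}$ and the hypergeometric parameters collapse to ${}_2F_1\bigl((n-1)-(k-1)+1,\,1;\,(n-1)+1;\,1-\gamma\bigr) = {}_2F_1(n-k+1,\,1;\,n;\,1-\gamma)$, which is exactly the claimed expression; the constraint $1 \le k-1 \le n-1$ of the theorem reproduces the stated range $2 \le k \le n$.

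I expect the only delicate point to be the boundary case $k = n$ in the waiting-time identity, where $W_n^{\rm unif} = T_n^{\rm unif}$ (since $T_{n+1} = 0$) must be matched with $W_{n+1}$ in the $(n+1)$-leaf conditional process; this is consistent provided the convention $T_{m+1} = 0$ is used uniformly for the most recent coalescent time in all trees. No genuine analytic difficulty arises, since everything else is a mechanical relabelling and the analytic continuation to all $\gamma > 0$ (and the $\gamma \to 0$ boundary, cf.\ Eq.~(\ref{EofWgamma0})) is already supplied by Theorem~\ref{theorem:ExpectedTk}.
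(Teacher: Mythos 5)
Your proposal is correct and follows essentially the same route as the paper: the paper's proof likewise invokes the index-shift identity (Eq.~(\ref{EgivenT1toEunif}), which is exactly the expectation form of Eq.~(\ref{fgivenT1tofunif})) to write $\mathbb{E}_n[W_k \mid T_1 = \infty] = \mathbb{E}_{n-1}^{{\rm unif}\,[0,\infty)}[W_{k-1}]$ and then substitutes Eq.~(\ref{EnOfWkUnifT1}). Your only addition is spelling out the passage from coalescent times to waiting times (including the $T_{m+1}=0$ boundary convention), which the paper leaves implicit.
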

\begin{proof}
From Eq.~(\ref{EgivenT1toEunif}), 
\[
\mathbb{E}_n[W_k \mid T_1 = \infty] = \mathbb{E}_{n - 1}^{{\rm unif}\, [0, \infty)}[W_{k - 1}], \qquad 2 \le k \le n, 
\]
and the result follows from  Eq.~(\ref{EnOfWkUnifT1}).  
\end{proof}

%

\subsection{Comparison with \citet{Ignatieva20}}
\label{sec:ComparisonIgnatieva}

An alternative approach which enables calculation of densities of $W_k$ and simulation of coalescent trees associated with CPPs with 
Wiuf's node-height distribution is that of \citet{Ignatieva20}, 
who construct the RRP as an inhomogeneous pure death process with rate $\mu_{\rm eff}(\tau)$ determined from Eq.~(\ref{FHToMuEff}).  
\citet{Ignatieva20} only claim results for a super-critical Bernoulli-sampled BD process and an improper uniform prior on the time since initiation 
of the process.  However, since their results implicitly rely solely on 
an underlying CPP with node height {\em cdf} of the form of Eq.~(\ref{FHWiuf}), they readily carry over to both Bernoulli-sampled BD processes 
and Poisson-sampled Feller diffusions descended from a single founding ancestor, both in the super-critical and sub-critical cases.   
Furthermore Eq.~(\ref{EgivenT1toEunif}) enables results to carry over to the case of a single founder infinitely far in the past.  

To convert from the notation of \citet[Section~4]{Ignatieva20} to the notation of the current paper, replace $T_k$ and $W_k$ with 
$\delta T_{n - k + 1}$ and $\delta W_{n - k}$ respectively, and replace $\psi\lambda'$ with $\gamma_\rho^{-1}$ in particular for the super-critical 
Bernoulli-sampled BD process.  More generally, we will drop the $\rho$ subscript from here on, keeping in mind that the parameter 
$\gamma$ in Eq.~(\ref{FHWiuf}) can stand for the parameter in any of Eqs.~(\ref{FHdeltagamma}), (\ref{FHrhoAltBD}) or (\ref{FHnuFeller}) 
defined by Eqs.~(\ref{WiufParams}), (\ref{WiufParamsRho}) or (\ref{gammaNuDef}) respectively.  

The inhomogeneous death rate corresponding to process associated 
with the node height distribution Eq.~(\ref{FHWiuf}) is $\mu_{\rm eff}(\tau) = \delta e^{\delta\tau}(e^{\delta\tau} -1 + \gamma)^{-1}$.  
Consistent with Eq.~(\ref{EofWgamma1}), this reduces to a rate-$\delta$ pure-death Yule process with expected waiting times $1/(\delta k)$ when $\gamma = 1$.  
For more general $\gamma$, the coalescent tree for any $\gamma$ can be simulated from rate-1 Yule process with time coordinate 
\citep[Eq.~(4.2)]{Ignatieva20} 
\[
u := \int_0^\tau \mu_{\rm eff}(\xi) d\xi = \log\{1 + \gamma^{-1}(e^{\delta \tau} - 1)\}, 
\]
and transforming back to the physical time coordinate via the inverse transformation $\delta \tau = \log\{1 + \gamma(e^u - 1)\}$.  Examples of coalescent tree 
shapes are shown in Fig.~\ref{fig:exampleTreesPlot}.  
\begin{figure}[t!]
\begin{center}
\centerline{\includegraphics[width=0.65\textwidth]{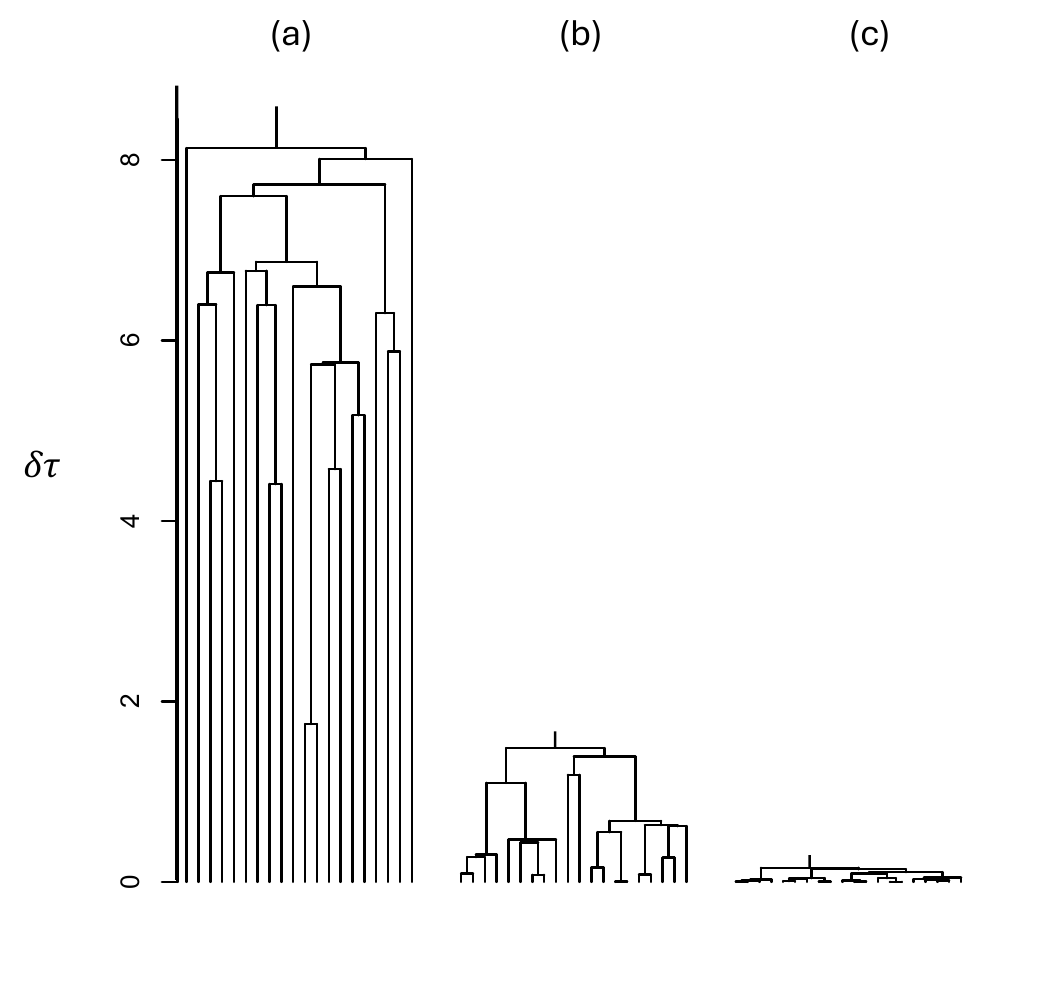}}
\caption{(a) The coalescent tree of a Bernoulli-sampled BD process or Poisson-sampled Feller diffusion conditioned on a sample of size 20 
for (a) $\gamma = 1000$, (b) $\gamma = 1$  (equivalent to a rate-1 Yule process), and (c) $\gamma = 0.05$.  } 
\label{fig:exampleTreesPlot}
\end{center}
\end{figure}

\citet[Section~4.3]{Ignatieva20} use the same inverse transformation from a rate-1 Yule process to calculate densities for the inter-coalescent times $W_k$.  
The coalescent times $T_k$ are shown in the limit $\gamma\to \infty$ to be equal in distribution to the order statistics of a logistic random variable 
up to a shift of $\delta^{-1}\log\gamma$.  This picture is consistent with the $\gamma \to \infty$ limit in Theorem~\ref{theorem:CasesWk}, which 
is relevant to supercritical processes sampled at low rates relative to population size.  From 
Eq.~(\ref{EofWgammaInf}) and the identity $T_j = \sum_{j = k}^n W_j, k = 1, \ldots, n,$ one obtains 
\[
\delta \mathbb{E}_k^{{\rm unif}\, [0, \infty)}[T_k] \sim \begin{cases}
	\log\gamma + \sum_{j = 1}^{n - 1} \frac{1}{j} 						,		& k = 1; \nonumber \\
	\log\gamma + \sum_{j = 1}^{n - k} \frac{1}{j} -  \sum_{j = 1}^{k - 1} \frac{1}{j}  	,		& k = 2, \ldots n - 1; \nonumber \\
	\log\gamma - \sum_{j = 1}^{n - 1} \frac{1}{j} 						,		& k = n, 
	\end{cases}
\]
as $\gamma\to \infty$, which agrees with \citet[Eqs.~(4.11) and (4.12)]{Ignatieva20}.  

%

\section{Conclusions}
\label{sec:Conclusions}

We have surveyed a number of published results related to scaling limits of linear BD  process and shown in each case the limit to be a manifestation 
of a Feller diffusion.  The unified approach taken has been via a limit of a CPP in which the node-height distribution becomes heavily skewed towards 
zero while keeping the stem height fixed, consequently allowing the number of leaves of the associated coalescent tree to tend towards infinity.  
The resulting coalescent tree acquires the familiar property of coming down from infinity, typical of diffusion limits in population genetics models.  
The approach allows for efficient derivation of properties of Feller diffusions, including the joint distribution of coalescent times when conditioning 
on an observed scaled population $X(t) = x$ in Theorem~\ref{thm:JointTkGivenX}, where $X(t)$ is the scaled population defined in 
Section~\ref{sec:FellerCPP}.  

One important property of branching processes associated with CPPs is that the reconstructed tree of the Bernoulli-sampled process is itself 
a CPP.  Analogously, we define in Subsection~\ref{sec:PSamplingFeller} a procedure called Poisson sampling in the diffusion limit.  Whereas 
for Bernoulli sampling with rate $\rho \in [0, 1]$ the sample size is binomial with success probability $\rho$ and number of trials equal 
to the population $N(t)$, for Poisson sampling with rate $\nu \in [0, \infty)$ the sample size is Poisson distributed with mean $2\nu X(t)$.  
In common with both Bernoulli-sampled and unsampled BD processes, the coalescent tree of a Poisson-sampled Feller diffusion is found to correspond 
to a CPP with node-height {\em cdf} taking the algebraic form of Eq.~(\ref{FHWiuf}), but with parameter $\gamma$ now defined by Eq.~(\ref{gammaNuDef}).   

\citet{Lambert18} has developed a procedure for determining coalescent properties for $k$-sampled populations once the corresponding properties 
of Bernoulli sampling have been determined.  In Subsection~\ref{sec:kSamplingFeller} we have extended the 
procedure to Poisson-sampled diffusions, and in Section~\ref{sec:kSampledT2toTk} apply it to the $k$-sampled Feller diffusion to rederive the joint 
distribution of coalescent times for a near-critical continuous-time Galton-Watson process previously found by \citet{Harris20}.  
In Section~\ref{sec:kSampledT2toTkXtConditioned} we demonstrate that the Limit Birth-Death process defined and analysed by \citet{Crespo21} is 
in fact a $k$-sampled Feller diffusion conditioned on a current observed population $X(t) = x$, and that the methods employed should in principle have 
further application to the $k$-sampling of diffusion processes.  

In Section~\ref{sec:ExpectedWkWiuf} we derive a formula in terms of hypergeometric functions for expected inter-coalescent waiting times of trees 
derived from CPPs associated with Wiuf's node-height distribution function Eq.~(\ref{FHWiuf}).  This functional form includes Bernoulli-sampled and 
unsampled BD processes and Poisson-sampled Feller diffusions.  

%

\section*{Acknowledgement} We thank two referees for a careful reading of the manuscript and their comments and suggestions which have improved the paper.

%

\appendix

%

\section{Notation}
\label{sec:notation}

\begin{tabular}{l p{10cm}}
$H_1, H_2, \ldots$							&Independent and identically distributed node heights in a CPP, as in Fig.~\ref{fig:CPP_diagram} \\
$t$										&Reserved for the stem age of a CPP, or equivalently, the time back to the root of the associated coalescent tree.  \\
$T_1, T_2, \ldots$							&Coalescent times as in Fig.~\ref{fig:CPP_diagram}.  $T_1 = t$ is the root.  
											$T_k, k = 2, 3, \,\ldots$ is the $k$-ancestor to $(k - 1)$-ancestor coalescent time.  \\ \\
$F_H(\tau)$, $f_H(\tau)$						&Typical node height {\em cdf} and density function for an unspecified CPP. \\
$F_H^{\rm IT}(\tau)$							&Node height inverse tail distribution, $F_H^{\rm IT}(\tau) = \mathbb{P}(H > \tau)^{-1}$.  \\
$F_H^{\rm BD}(\tau)$						&Node height {\em cdf} for a BD process, normalised to ensure convergence to a single ancestral founder as $t \to \infty$, 
											Eq.~(\ref{FHaltBD}), or equivalently Eq.~(\ref{FHdeltagamma}).\\
$F_{H_\rho}^{\rm BD}(\tau)$					&Node height {\em cdf} for a Bernoulli-sampled BD process sampled at rate $\rho$, normalised to ensure convergence 
											to a single ancestral founder as $t \to \infty$, Eq.~(\ref{FHrhoAltBD}).\\ 
$F_{H_\nu}^{\rm Feller}(\tau)$					&Node height {\em cdf} for a Poisson-sampled Feller process sampled at rate $\nu$, normalised to ensure convergence 
											to a single ancestral founder as $t \to \infty$, Eq.~(\ref{FHnuFeller}).\\
$F_H^{\rm Wiuf}(\tau)$						&In Section~\ref{sec:ExpectedWkWiuf}, any node height {\em cdf} taking the generic algebraic form of $F_H^{\rm BD}(\tau)$, 													$F_{H_\rho}^{\rm BD}(\tau)$ or $F_{H_\nu}^{\rm Feller}(\tau)$, viz.\ Eq.~(\ref{FHWiuf}).\\ \\
$\mu_{\rm eff}(\tau)$							&Instantaneous death-rate in a RRP.  \\
$\lambda, \mu$								&Instantaneous constant birth and death rates in a BD process. \\
$\rho$									&Bernoulli sampling rate a BD process.  \\
$\lambda_\epsilon, \mu_\epsilon, \rho_\epsilon$	&Instantaneous birth and death rates and Bernoulli sampling rate in the pre-limit BD process leading to 
											a Feller diffusion as $\epsilon \to 0$ (Sections~\ref{sec:FellerCPP} and \ref{sec:PSamplingFeller}). \\
$\alpha$									&Growth rate in a Feller process. \\
$\delta$									&For a BD process, $|\lambda - \mu|$ (Eq.~(\ref{WiufParams})); for a Feller process, 
											$\lim_{\epsilon\to 0}|\lambda_\epsilon - \mu_\epsilon| = |\alpha|$ (Eq.~(\ref{deltaGamma_epsilon})).  \\
$\gamma$								&The parameter defined by Eq.~(\ref{WiufParams}) occurring in $F_H^{\rm BD}(\tau)$.  Also used in 
											Section~\ref{sec:ExpectedWkWiuf} to stand for any of $\gamma$, $\gamma_\rho$ or $\gamma_\nu$.  \\
$\gamma_\rho$								&The parameter defined by Eq.~(\ref{WiufParamsRho}) occurring in $F_{H_\rho}^{\rm BD}(\tau)$. \\
$\gamma_\nu$								&The parameter defined by Eq.~(\ref{gammaNuDef}) occurring in $F_{H_\nu}^{\rm Feller}(\tau)$. 
\end{tabular}

%

\section{Iterative formulae for $\mathbb{E}_n[T_{k} \mid T_1 = t]$ and $\mathbb{E}_n[W_{k} \mid T_1 = t]$.}
\label{sec:iterativeTandW}

The expectation of the coalescent time $T_k$ conditioned on $T_1 = t$ and $n$ leaves is 
\[
\mathbb{E}_n[T_{k} \mid T_1 = t] = \int_0^t \tau f_{T_k \mid T_1 = t}(\tau \mid n) d\tau, 
\]
with $f_{T_k \mid T_1 = t}(\tau \mid n)$ as in Eq.~(\ref{Tk_density}).  Following \citet[Appendix~C]{Wiuf18} set 
\[
u = G^{-1}(\tau) := 1 - \frac{F_H(\tau)}{F_H(t)}, \quad du = -\frac{f_H(\tau)}{F_H(t)} d\tau. 
\]
Then
\begin{eqnarray} 
\lefteqn{\mathbb{E}_n[T_{k} \mid T_1 = t]} \nonumber \\
	& = & \frac{(n - 1)!}{(n - k)!(k - 2)!} \int_0^1 u^{k - 2} (1 - u)^{n - k}G(u) du \label{EnTkgivenT1} \\
	& = & \frac{(n - 1)!}{(n - k)!(k - 2)!} \left\{\int_0^1 u^{k - 3} (1 - u)^{n - k}G(u) du - 
							\int_0^1 u^{k - 3} (1 - u)^{n - k + 1}G(u) du \right\} \nonumber \\
	& = & \frac{n - 1}{k - 2} \frac{(n - 2)!}{(n - k)!(k - 3)!}  \int_0^1 u^{k - 3} (1 - u)^{n - k}G(u) du \nonumber \\
	&&		\qquad -\, \frac{n - k + 1}{k - 2} \frac{(n - 1)!}{(n - k + 1)!(k - 3)!} \int_0^1 u^{k - 3} (1 - u)^{n - k + 1}G(u) du \nonumber \\
	& = & \frac{n - 1}{k - 2} \mathbb{E}_{n - 1}[T_{k - 1} \mid T_1 = t] - \frac{n - k + 1}{k - 2} \mathbb{E}_n[T_{k - 1} \mid T_1 = t].   \nonumber
\end{eqnarray}

Eq.~(\ref{iterativeWkgivenT1}) for expected waiting times $W_k$ is confirmed by repeated use of Eq.~(\ref{iterativeTkgivenT1}).  The left hand side is 
\begin{eqnarray*}
\lefteqn{\mathbb{E}_n[T_k \mid T_1 = t] - \mathbb{E}_n[T_{k + 1} \mid T_1 = t]}\nonumber \\
	& = & \frac{n - 1}{k - 2} \mathbb{E}_{n - 1}[T_{k - 1} \mid T_1 = t] - \frac{n - k + 1}{k - 2} \mathbb{E}_n[T_{k - 1} \mid T_1 = t] \nonumber \\
	&&		-\, \frac{n - 1}{k - 1} \mathbb{E}_{n - 1}[T_{k} \mid T_1 = t] + \frac{n - k}{k - 1} \mathbb{E}_n[T_{k} \mid T_1 = t],  
\end{eqnarray*}
and the right hand side is 
\begin{eqnarray*}
	&  &\frac{n - 1}{k - 1} \mathbb{E}_{n - 1}[T_{k - 1} \mid T_1 = t] - \frac{n - 1}{k - 1} \mathbb{E}_{n - 1}[T_{k} \mid T_1 = t] \nonumber \\
	&  & -\, \frac{n - k + 1}{k - 1} \mathbb{E}_n[T_{k - 1} \mid T_1 = t] + \frac{n - k + 1}{k - 1} \mathbb{E}_n[T_{k} \mid T_1 = t].  	
\end{eqnarray*} 
The left hand side minus the right hand side is, after gathering terms and simplifying, 
\begin{eqnarray*}
\lefteqn{\frac{1}{k - 1}\left\{ \frac{n - 1}{k - 2}\mathbb{E}_{n - 1}[T_{k - 1} \mid T_1 = t] \right.} \nonumber \\  
		& & \qquad\qquad\qquad \left. -\,  \frac{n - k + 1}{k - 2} \mathbb{E}_n[T_{k - 1} \mid T_1 = t] - \mathbb{E}_n[T_{k} \mid T_1 = t]\right\} = 0, 
\end{eqnarray*} 
by Eq.~(\ref{iterativeTkgivenT1}), as required.

%

\section{Proof of Theorem~\ref{thm:JointTkGivenX}}
\label{sec:JointTkProof}

First note that the pre-limit parameters $\delta$ and $\gamma_\epsilon$ satisfy Eq.~(\ref{deltaGamma_epsilon}).  
Thus it is sufficient to prove the result for $|\alpha| = \delta \ge 0$.  Furthermore, taking into account the scaling  $X(t) = \epsilon N_\epsilon(t)$,
the condition $X(t) = x$ is effected by setting $\gamma_\epsilon = 2\delta x/n$ 
and taking the limit $n \to \infty$ with fixed $\delta$ in Eqs.~(\ref{T2_to_Tk_density}) and (\ref{Tk_density}), which are expressed in terms of the 
convention of Eq.~(\ref{normalisedFH}).  

The limiting behaviour of each factor is 
\[
\frac{(n - 1)!}{(n - k)!} \sim n^{k - 1}; 
\]
\begin{eqnarray*}
f(\tau_j) = \frac{f^{\rm BD}_H(\tau_j)}{F^{\rm BD}_H(t)} & = & \frac{2\delta^2 xe^{\delta \tau_j}}{n(e^{\delta \tau_j} - 1 + \frac{2\delta x}{n})^2} 
							\times \frac{e^{\delta t} - 1 + \frac{2\delta x}{n}}{e^{\delta t} - 1} \nonumber \\
		& \sim & \frac{2\delta^2 x e^{\alpha \tau_j}}{n(e^{\alpha\tau_j} - 1)^2} \times 1 \nonumber \\ 
		& = & \frac{1}{n} \frac{x \mu(\tau_j; |\alpha|)}{2\beta(\tau_j; |\alpha|)} ; 
\end{eqnarray*}
\[
F^{\rm BD}_H(\tau)^{n - k} \sim \left(1 - \frac{2\delta x}{n(e^{\delta \tau} - 1)}\right)^{n - k} \sim e^{-x/\beta(\tau; \delta)}, 
\]
leading to 
\[
F(\tau_k)^{n - k} = \left(\frac{F^{\rm BD}_H(\tau_k)}{F^{\rm BD}_H(t)}\right)^{n - k} \sim e^{-(x/\beta(\tau_k; \delta) - x/\beta(t; \delta))};   
\]
and 
\begin{eqnarray*}
1 - F(\tau) & = & 1 - \frac{F^{\rm BD}_H(\tau)}{F^{\rm BD}_H(t)}  \nonumber \\ 
	& \sim & 1 - \left(1 - \frac{2\delta x}{n(e^{\delta \tau} - 1)}\right) \left(1 - \frac{2\delta x}{n(e^{\delta t} - 1)}\right)^{-1} \nonumber \\ 
	& \sim & 1 - \left(1 - \frac{1}{n}\frac{x}{\beta(\tau; \delta)}\right) \left(1 - \frac{1}{n}\frac{x}{\beta(t; \delta)}\right)^{-1} \nonumber \\ 
	& \sim & \frac{1}{n}\left(\frac{x}{\beta(\tau; \delta)} - \frac{x}{\beta(t; \delta)}\right). 
\end{eqnarray*}
Assembling the factors leads to Eqs.~(\ref{jointTkGivenx}) and (\ref{TkGivenx}).  
\qed

%

\section{Proof of Theorem~\ref{thm:BernoulliToPoisson}}
\label{sec:proofBToP}

Begin with a pre-limit BD process with 
node height {\em cdf} from Eq.~(\ref{FHdeltagamma}), 
\[
a_\epsilon :=  \frac{e^{\delta\tau} - 1}{e^{\delta\tau} - 1 + \gamma_\epsilon}, \qquad \tau, \gamma_\epsilon, \delta > 0,   
\]
where $\epsilon$ is the parameter introduced in Section~\ref{sec:FellerCPP} for defining the Feller diffusion limit, 
and $\delta$ and $\gamma_\epsilon$ are defined by Eq.~(\ref{deltaGamma_epsilon}).  In Section~\ref{sec:PSamplingFeller}, 
Poisson sampling at rate $\nu$ of a Feller diffusion is defined as the limit as $\epsilon \to 0$ of Bernoulli sampling 
at rate $\rho_\epsilon = 2\nu\epsilon$ of the pre-limit BD process.  

The measure for Poisson to $k$-sampling of a Feller diffusion analogous to Eq.~(\ref{muKDRhoDef}) is 
\[
\mu_k(d\nu) := \lim_{\epsilon \to 0} \frac{k(1 - a_\epsilon)\rho_\epsilon^{k - 1} d\rho_\epsilon}{(1 - a_\epsilon(1 - \rho_\epsilon))^{k + 1}} .  
\]
The numerator is 
\[
k \frac{2\delta\epsilon}{e^{\delta t} - 1 + 2\delta\epsilon} (2\nu\epsilon)^{k - 1}   d(2\epsilon\nu) = 
			\frac{2^{k + 1} k\delta \nu^{k - 1} d\nu}{e^{\delta t} - 1} ( \epsilon^{k + 1} + \mathcal{O}(\epsilon^{k + 2})),  
\]
and the denominator is 
\[
\left(1 - \frac{e^{\delta t} - 1}{e^{\delta t} - 1 + 2\delta\epsilon} (1 - 2\nu\epsilon) \right)^{k + 1} =  
		\left( \frac{2(\delta + \nu(e^{\delta t} - 1)}{e^{\delta t} - 1} \right)^{k + 1}  ( \epsilon^{k + 1} + \mathcal{O}(\epsilon^{k + 2})).
\]
Thus the required measure is 
\[
\mu_k(d\nu) = \frac{k\delta \nu^{k - 1} (e^{\delta t} - 1)^k}{(\delta + \nu(e^{\delta t} - 1))^{k + 1}} d\nu = \frac{k v^{k - 1}}{(1 + v)^{k + 1}}  dv
		, \quad v \in (0, \infty),  
\]
where
\[
v = \frac{\nu}{\delta} (e^{\delta t} - 1) = 2\nu\beta(t; \delta),  
\]
agreeing with Eq.~(\ref{mukOfDnu}).
\qed

%

\section{Proof of Theorem~\ref{AltHarrisTheorem}}
\label{sec:HarrisProof}

Applying Theorem~\ref{thm:BernoulliToPoisson}, the joint {\em cdf} of coalescent times $T_2, \ldots, T_k$ under $k$ sampling is 
\begin{eqnarray} 	\label{cdfTkFromcdfBern}
\lefteqn{F_{T_2\ldots T_k}^{k\text{-}{\rm sample}}(\tau_2, \ldots, \tau_k \mid T_1 = t) 
	:= \mathbb{P}(T_2 \le \tau_2, \ldots, T_k \le \tau_k \mid T_1 = t)}  \nonumber \\
	& = & \int_0^\infty \frac{k v^{k - 1}}{(1 + v)^{k + 1}}  
					F_{T_2\ldots T_k}^{{\rm Poisson}\,\nu}(\tau_2, \ldots, \tau_k \mid T_1 = t) dv.
\end{eqnarray}
Since the Poisson-sampled coalescent tree is generated by a CPP, we can apply Eq.~(\ref{T2_to_Tn_cdf}) to write 
\[
F_{T_2\ldots T_k}^{{\rm Poisson}\,\nu}(\tau_2, \ldots, \tau_k \mid T_1 = t) = (k - 1)! \prod_{j = 2}^k \frac{F_{H_\nu}(\tau_j)}{F_{H_\nu}(t)}
\]
where $F_{H_\nu}(t)$ is the node height {\em cdf} Eq.~(\ref{FHnuAltFeller}).  Then 
\begin{eqnarray*}
\frac{F_{H_\nu}(\tau)}{F_{H_\nu}(t)} & = & \frac{2\beta(\tau; \alpha)}{2\beta(\tau; \alpha) +1/\nu} \cdot \frac{2\beta(t; \alpha) +1/\nu}{2\beta(t; \alpha)} \nonumber \\
	& = & \frac{\beta(\tau; \alpha)}{\beta(\tau; \alpha) + \beta(t; \alpha)/v} \cdot \frac{\beta(t; \alpha) + \beta(t; \alpha)/v}{\beta(t; \alpha)} \nonumber \\
	& = & \frac{v + 1}{v + B(\tau)},
\end{eqnarray*}  
where $B(\tau) = \beta(t, \alpha)/\beta(\tau; \alpha)$ and $\beta(\tau; \alpha)$ is defined by Eq.~(\ref{betaDef}).  Substituting back into Eq.~(\ref{cdfTkFromcdfBern}), 
\begin{eqnarray*}
F_{T_2\ldots T_k}^{k\text{-}{\rm sample}}(\tau_2, \ldots, \tau_k \mid T_1 = t) 
	& = & k! \int_0^\infty \frac{v^{k - 1}}{(1 + v)^2} \left(\prod_{j = 2}^k \frac{1}{v + B(\tau_j)} \right) dv \nonumber \\
	& = & k! \int_0^\infty \left\{ \frac{C}{(1 + v)^2} + \frac{D}{1 + v} + \sum_{j = 2}^k \frac{G_j}{v + B(\tau_j)} \right\} dv, 
\end{eqnarray*}
where the partial fraction coefficients are determined from 
\[
v^{k - 1} = C \prod_{j = 2}^k (v + B(\tau_j)) + D(1 + v) \prod_{j = 2}^k (v + B(\tau_j)) + (1 + v)^2 \sum_{j = 2}^k G_j \prod_{l \ne j} (v + B(\tau_l)).  
\]
Setting $v = -1$ gives 
\[
C = \prod_{j = 2}^k (1 - B(\tau_j))^{-1} = \prod_{j = 2}^k \frac{\beta(\tau_j; \alpha)}{\beta(\tau_j; \alpha) - \beta(t; \alpha)}, 
\]
setting $v = -B(\tau_i)$ gives 
\begin{eqnarray*}
G_i & = & \frac{- B(\tau_i)^{k - 1}}{(1 - B(\tau_i))^2} \prod_{l = 2:k, \, l \ne i} (B(\tau_i) - B(\tau_l))^{-1} \nonumber \\
		& = & - \frac{\beta(\tau_j; \alpha)\beta(t; \alpha)}{(\beta(\tau_j; \alpha) - \beta(t; \alpha))^2} 
							\prod_{l = 2:k, \, l \ne i} \frac{\beta(\tau_l; \alpha)}{\beta(\tau_l; \alpha) - \beta(t; \alpha)},
\end{eqnarray*}
and equating coefficients of $v^k$ gives 
\[
D = -\sum_{j = 2}^k G_j. 
\]
Then 
\begin{eqnarray*}
F_{T_2\ldots T_k}^{k\text{-}{\rm sample}}(\tau_2, \ldots, \tau_k \mid T_1 = t) 
	& = & k! \int_0^\infty \left\{ \frac{C}{(1 + v)^2} + \sum_{j = 2}^k G_j \left(\frac{1}{v + B(\tau_j)} - \frac{1}{1 + v} \right) \right\} dv \nonumber \\ 
	& = & k! \left( C - \sum_{j = 2}^k G_j \log B(\tau_j) \right),   
\end{eqnarray*}
with $B(\tau_j), C$ and $G_j$ defined above and $t  >  \tau_2 > \ldots > \tau_k > 0$. 
 \qed

%

\section{Matching $k$-sample coalescent times to population coalescent times}
\label{sec:matchingTtildeT}

In Section~\ref{sec:kSampledT2toTkXtConditioned} formulae are given for the probability distribution of coalescent times for a uniform 
random sample of size $k$ among the coalescent times of an infinite population.  These formulae are adapted from analogous equations 
in \citet{Crespo21}, who in turn, make use of general results for subsampled exchangeable binary coalescent trees by \citet{Saunders84}.  

Figure~\ref{fig:m_kDef} shows part of a population coalescent tree in black, in which is embedded the corresponding part of the sample coalescent tree, 
shown in red.  For each of the sample coalescent times $\tilde{T}_l$, $2 \le l \le k$, \citet{Crespo21} set the (random) 
number of population ancestors immediately above the coalescent time in the diagram to $m_{l - 1}$.  Thus the event $m_{l - 1} = y$
is equivalent to the event $\tilde{T}_l = T_{y + 1}$.  Note also the boundary condition $m_k = \infty$ corresponding to $\tilde{T}_{k + 1} = T_\infty = 0$ 
at the leaves of the tree.  

\begin{figure}
 \centering
 \includegraphics[width=0.5\linewidth]{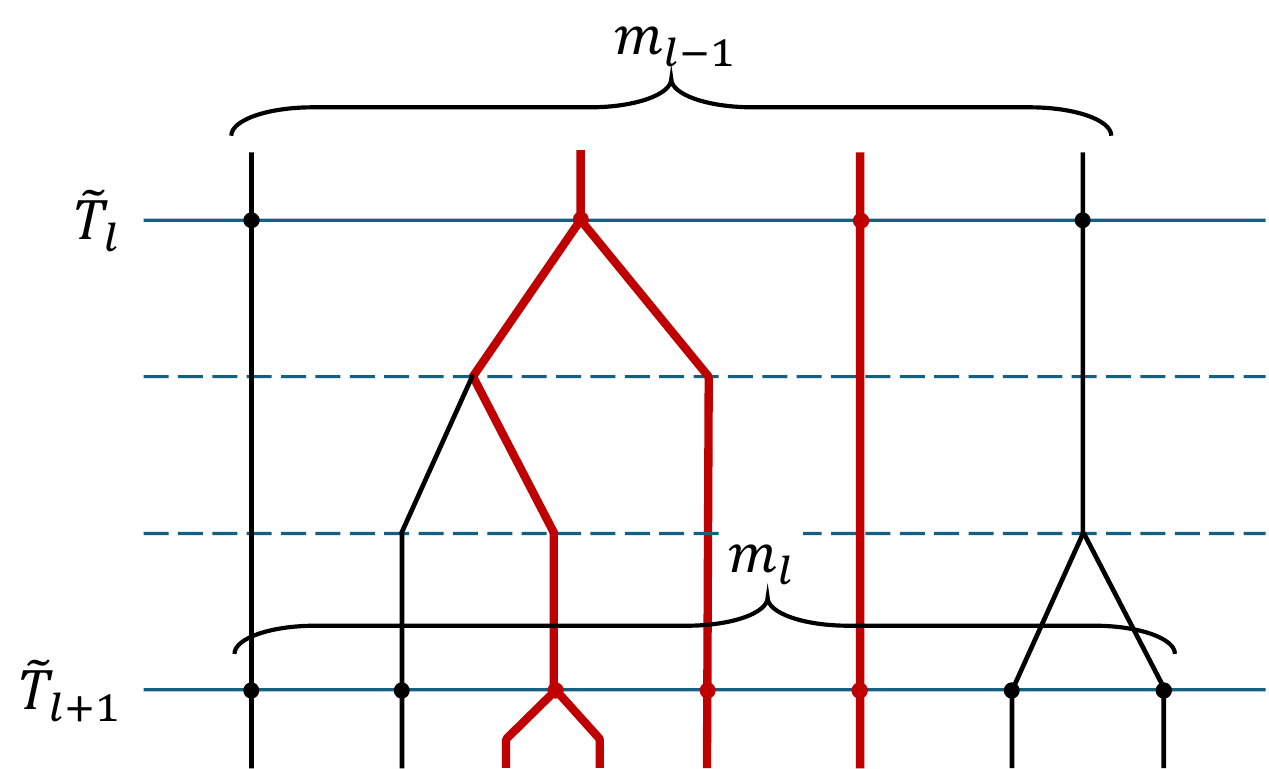}
  \caption{Section of a population coalescent tree in black and the embedded sample coalescent tree in red.  In this example the 
  sample coalescent times $\tilde{T}_l$ and $\tilde{T}_{l + 1}$ are shown for $l = 3$.  We have $m_{l - 1} = 4, m_l = 7$, and hence 
  $\tilde{T}_3 = T_5$ and $\tilde{T}_4 = T_8$. }
 \label{fig:m_kDef}
 \end{figure}

The {\em cdf}s for the $m_l$ are stated iteratively in \citet[Eqs.~(7) and (6)]{Crespo21} as 
\begin{eqnarray*}
\mathbb{P}(m_{k - 1} \le y \mid m_k = \infty) & = & \frac{y!}{(y - k + 1)!}\frac{(y + 1)!}{(y + k)!}, \qquad y = k - 1, k, \ldots, \nonumber \\
\mathbb{P}(m_{l - 1} \le y \mid m_l) & = & \frac{(m_l - l)!(m_l + l - 1)!}{m_l!(m_l - 1)!} \frac{y!}{(y - l + 1)!}\frac{(y + 1)!}{(y + l)!},  \nonumber \\
	& & \qquad\qquad y = l - 1, \ldots, m_l - 1; \, l = 2, \ldots, k - 1,
\end{eqnarray*}
from which one obtains the probabilities\footnote{Alternatively, start from \citet[Lemma~3]{Saunders84} with the replacements 
$i \rightarrow m_l, l \rightarrow y, j \rightarrow l, k \rightarrow l - 1$.}
\begin{eqnarray*}
\mathbb{P}(m_{k - 1} = y \mid m_k = \infty) & = & k(k - 1) \frac{y!(y - 1)!}{(y - k + 1)!(y + k)!}, \qquad y = k - 1, k, \ldots, \nonumber \\
\mathbb{P}(m_{l - 1} = y \mid m_l) & = & l(l - 1) \frac{(m_l - l)!(m_l + l - 1)!}{m_l!(m_l - 1)!} \frac{y!(y - 1)!}{(y - l + 1)!(y + l)!} \nonumber \\
	& & \qquad\qquad y = l - 1, \ldots, m_l - 1; \, l = 2, \ldots, k - 1.
\end{eqnarray*}
Eqs.~(\ref{PTtildenEqualsT}) and (\ref{PTtildekEqualsT}) then follow from 
\begin{equation*}
\begin{split}
\mathbb{P}(\tilde{T}_k = T_i) & = \mathbb{P}(m_{k - 1} = i - 1 \mid m_k = \infty) \nonumber \\ 
\mathbb{P}(\tilde{T}_l = T_i \mid \tilde{T}_{l + 1} = T_j) & = \mathbb{P}(m_{l - 1} = i - 1 \mid m_l = j - 1). 
\end{split}
\end{equation*}

%

\section{Calculation of $\mathbb{E}_n^{{\rm unif}\, [0, \infty)}[W_1]$}
\label{sec:EnW1Calc}

It is not immediately clear that the derivation of $\mathbb{E}_n^{{\rm unif}\, [0, \infty)}[W_1]$ in \citet{Wiuf18}
depends only on the form Eq.~(\ref{FHWiuf}) of the node-height distribution, as required in the current paper.   This appendix fills in the details.  
From Eqs.~(\ref{EgivenT1toEunif}) and (\ref{EnTkgivenT1}) we have 
\[
\mathbb{E}_n^{{\rm unif}\, [0, \infty)}[T_1] = \mathbb{E}_{n + 1}[T_{2} \mid T_1 = \infty] = n\int_0^1 (1 - u)^{n - 1} G(u) du, 
\]
where, from Eq.~(\ref{FHWiuf}), $G^{-1}(\tau) = 1 - F_H^{\rm Wiuf}(\tau) = \gamma/(e^{\delta \tau} - 1 + \gamma)$, which inverts to  
\[
G(u) = \frac{1}{\delta} \log\frac{\gamma + (1 - \gamma)u}{u}, 
\]
Hence 
\[
\mathbb{E}_n^{{\rm unif}\, [0, \infty)}[T_1] = \frac{n}{\delta} \int_0^1 (1 - u)^{n - 1} \log\frac{\gamma + (1 - \gamma)u}{u} du.
\]
This is the starting point in \citet[Appendix E]{Wiuf18}, with the expected waiting time Eq.~(\ref{EnOfW1}) then derived in 
\citet[Appendix G]{Wiuf18}.  

%

\section{Proof of Theorem~\ref{theorem:ExpectedWk}}
\label{sec:proofExpectedWk}

By using the identity 
\[
\log\gamma = \log[1 - (1 - \gamma)] = - \sum_{i = 1}^\infty \frac{(1 - \gamma)^i}{i}, \qquad |1 - \gamma| < 1, 
\]
Eq.(\ref{EnOfW1}) can be rewritten for $|1 - \gamma| < 1$ as 
\[
\mathbb{E}_n^{{\rm unif}\, [0, \infty)}[W_1] = \frac{\gamma}{\delta} \sum_{l = 0}^\infty \frac{n}{n + l} (1 - \gamma)^l.  
\]
For $k = 1, \ldots, n$, this generalises to the solution of the recursion Eq.~(\ref{iterativeWkunifT1}), 
\begin{equation}	\label{EnOfWkInfSum}
\mathbb{E}_n^{{\rm unif}\, [0, \infty)}[W_k] = \frac{\gamma}{\delta} \frac{1}{k} \sum_{l = 0}^\infty \frac{n_{[k]}}{(n + l)_{[k]}} (1 - \gamma)^l, 
			\qquad |1 - \gamma| < 1, 
\end{equation}
as can be confirmed by induction.  Here, $n_{[k]} = n(n - 1) \cdots (n - k + 1)$ is the falling factorial.  Eq.~(\ref{EnOfWkUnifT1}) follows 
immediately for $|1 - \gamma| < 1$ using Eq.~(\ref{HypergeomDef}).  The hypergeometric function Eq.~(\ref{HypergeomDef}) can be analytically continued 
via \citet[Eq.~(15.3.1)]{Abramowitz:1965sf} throughout the complex $z$-plane cut along the real axis from $1$ to $\infty$, and hence 
Eq.~(\ref{EnOfWkUnifT1}) is defined for $\gamma > 0$ by analytic continuation.  
\qed

%

\section{Proof of Theorem~\ref{theorem:CasesWk}}
\label{sec:proofCasesWk}

The $k = 1$ case of Eq.~(\ref{EofWgamma0}) follows immediately from Eq.~(\ref{EnOfW1}).  For $k = 2, \ldots, n$, begin with Eq.~(\ref{EnOfWkUnifT1}).  \citet[Eq.~(15.1.20)]{Abramowitz:1965sf} gives
\[
_2F_1(n - k + 1, 1; n + 1; 1) = \frac{n}{k - 1}, 
\]
and the remaining cases of Eq.~(\ref{EofWgamma0}) follow.    

From Eq.~(\ref{HypergeomDef}), $_2F_1(a, b; c; 0)) = 1$, from which Eq.~(\ref{EofWgamma1}) immediately follows.  

From \citet[Eq.~(15.3.5)]{Abramowitz:1965sf}, 
\[
_2F_1(n - k + 1, 1; n + 1; 1 - \gamma) = {\gamma^{-1}} \,_2F_1(1, k; n + 1; 1 - \gamma^{-1}).  
\]
Then for $k = 1, \ldots, n - 1$, using \citet[Eq.~(15.1.20)]{Abramowitz:1965sf}, 
\[
\lim_{\gamma \to \infty} \mathbb{E}_n^{{\rm unif}\, [0, \infty)}[W_k] = \frac{1}{\delta k} \,_2F_1(1, k; n + 1; 1) = \frac{1}{\delta k} \frac{n}{n - k}, 
\]
and the first line of Eq.~(\ref{EofWgammaInf}) follows.    
\citet[Section~8.3 and Appendix~H]{Wiuf18}  calculates the expected total length of the tree to be 
\begin{equation}	\label{sumT_i}
\mathbb{E}_n^{{\rm unif}\, [0, \infty)}\left[\sum_{i = 1}^n i W_i \right] = \mathbb{E}_n^{{\rm unif}\, [0, \infty)}\left[\sum_{i = 1}^n T_i \right] = 
								\frac{n}{\delta} \frac{\gamma}{\gamma - 1} \log\gamma, \qquad \gamma \ne 1. 
\end{equation}
Hence
\begin{eqnarray*}
n \mathbb{E}_n^{{\rm unif}\, [0, \infty)}[W_n] & = & \frac{n}{\delta} \frac{\gamma}{\gamma - 1} \log\gamma - \sum_{i = 1}^{n - 1}  i \mathbb{E}_n\left[W_i \right] \\
	& = & \frac{n}{\delta} \frac{\gamma}{\gamma - 1} \log\gamma - \sum_{i = 1}^{n - 1} \frac{n}{\delta(n - i)} + \mathcal{O}(\gamma^{-1})\\
	& = & n \left\{ \frac{1}{\delta} \left( \log\gamma - \sum_{i = 1}^{n - 1} \frac{1}{i} \right) \right\}  + 
														\mathcal{O}(\gamma^{-1}) \quad \text{as } \gamma\to\infty,  
\end{eqnarray*}
confirming the second line of Eq.~(\ref{EofWgammaInf}).  
\qed

%

\bibliographystyle{elsarticle-harv}\biboptions{authoryear}

\end{document}